\numberwithin{equation}{section}   
\title{\bf A Linear Quadratic Stochastic Stackelberg Differential Game with Time Delay
  \thanks{This work is financially supported by the National Key R\&D Program of China (2018YFB1305400), and the National Natural Science Foundations of China (11971266, 11571205, 11831010).}}
\author{\normalsize Weijun Meng\thanks{\it School of Mathematics, Shandong University, Jinan 250100, P.R. China, E-mail: mengwj@mail.sdu.edu.cn},\ Jingtao Shi\thanks{\it Corresponding author, School of Mathematics, Shandong University, Jinan 250100, P.R. China, E-mail: shijingtao@sdu.edu.cn}}
\newtheorem{mythm}{Theorem}[section]
\newtheorem{mylem}{Lemma}[section]
\newtheorem{Remark}{Remark}[section]
\begin{document}

\maketitle \noindent{\bf Abstract:}\quad
This paper is concerned with a linear quadratic stochastic Stackelberg differential game with time delay. The model is general, in which the state delay and the control delay both appear in the state equation, moreover, they both enter into the diffusion term. By introducing two Pseudo-Riccati equations and a special matrix equation, the state feedback representation of the open-loop Stackelberg strategy is derived, under some assumptions. Finally, two examples are given to illustrate the applications of the theoretical results.

\vspace{2mm}

\noindent{\bf Keywords:}\quad time delay, stochastic Stackelberg differential game, leader and follower, linear quadratic control, Riccati equation, open-loop Stackelberg strategy

\vspace{2mm}

\noindent{\bf Mathematics Subject Classification:}\quad 93E20, 60H10, 49N10, 91A15, 91A65, 34K35, 34K50

\section{Introduction}

Stackelberg differential games have been investigated in order to obtain optimal strategies in competitive economics in which there exist leaders and followers. Stackelberg strategies are rational solutions when there are two players and the leader knows the rational reaction of the follower and reveals first his strategy, while the follower does not know the rational reaction of the leader and has to optimize his criterion for a given control of the leader. H. von Stackelberg \cite{Stack34} first introduced a hierarchical solution for markets with leaders and followers in 1934, which is now known as the Stackelberg equilibrium. The notion of the Stackelberg solution was later extended to a multiperiod setting, see Simaan and Cruz \cite{SC73}. Among these, for the deterministic dynamics, the Stackelberg strategy was studied in Papavassilopoulos and Cruz \cite{PC79}, Pan and Yong \cite{PY91}, Ba\c{s}ar and Olsder \cite{BO95}, Freiling et al. \cite{FJL01}. And for the stochastic dynamics when random noises are considered, the Stackelberg strategy was considered in Castanon and Athans \cite{CA76}, Ba\c{s}ar \cite{B79}, Yong \cite{Y02}, Bensoussan et al. \cite{BCS15}, Shi et al. \cite{SWX16,SWX17,SWX20} and the references therein. 

In the above results, the states of the controlled systems only depend on the value of the current time. However, the development of some random phenomena in the real world depends not only on their current value, but also on their past history. So far there have been extensive literature to study the stochastic optimal control problems with time delay, see Mohammed \cite{Mohammed84,Mohammed98}, \O ksendal and Sulem \cite{OS00}, Chen and Wu \cite{CW10}, Huang et al. \cite{HLS12}, Zhang and Xu \cite{ZX17} and the references therein. Hence it is of great importance to consider time delay in the dynamics of Stackelberg differential games. There exist a few literatures in this topic and let us mention some of them. Harband \cite{H77} studied the existence of monotonic solutions of a nonlinear car-following equation. Ishida and Shimemura \cite{IS83} derived the sufficient conditions for constructing a team-optimal closed-loop Stackelberg strategies in a linear quadratic differential game with time delay. Ishida and Shimemura \cite{IS87} researched necessary and sufficient conditions for the open-loop Stackelberg strategies in a linear quadratic differential game with time delay, where the evolution of the game with time delay is described by coupled differential equations composed of lumped and distributed parameter subsystems. \O ksendal et al. \cite{OSU14} considered explicit formulae for equilibrium prices in a continuous-time vertical contracting model, in which the information is delayed. Bensoussan et al. \cite{BCY15} investigated an $N$-player interacting strategic game in the presence of a (endogenous) dominating player, who gives direct influence on individual agents, through its impact on their control in the sense of Stackelberg game, and then on the whole community. But only the leader's state delay appears in the model. Xu and Zhang \cite{XZ16} focused on leader-follower differential games with time delay and overcame the non-causality difficulty of strategy design caused by the delay by introducing the new co-states which capture the future information of the control and the new state which contains the past effects, however the system is deterministic. Bensoussan et al. \cite{BCLY17} was concerned with a linear-quadratic mean-field game between a leader and a group of followers, but no delay appears in the diffusion term. Xu et al. \cite{XSZ18} obtained the open-loop Stackelberg strategy for the linear quadratic leader-follower stochastic differential game with time delay only appearing in the leader's problem. Li et al. \cite{LWXZ18} considered a Stackelberg strategy for deterministic mixed $H_2/H_\infty$ control problem with time delay in the leader's control. Bai et al. \cite{BZXG20} addressed a Stackelberg stochastic differential reinsurance investment game problem and derived the equilibrium strategy explicitly for the game, however the diffusion term of the follower does not contain the time delay.

As mentioned above, in the above literatures, either the delays do not both appear in the state equations of leader and follower, or the state delay and the control delay do not both enter the diffusion term of the state equation. This paper aims to propose a general model, which can cover all the above cases, and gives the state feedback representation of an open-loop Stackelberg strategy to such a differential game. In summary, the contributions and innovations of this paper are as follows.
\begin{itemize}
 \item Our model is general. The state equations of leader and follower both contain state delay and control delay, moreover, which both enter into the diffusion terms.
 \item Our conclusions are simple. The open-loop stackelberg strategies are derived via two Pseudo-Riccati equations (see (\ref{P1-equation}), (\ref{P2-equation})) and a matrix equation (\ref{L-equation}), which are easier to solve than the Riccati equations (6), (19) and the matrix equation (25) in \cite{XSZ18}.
 \item Our theory is efficient. Two examples are addressed applying the above theoretical results.
\end{itemize}

The paper is organized as follows. Section 2 formulates the linear quadratic stochastic Stackelberg differential game with time delay. The optimization problem of the follower is discussed in Section 3, and then the optimization problem of the leader is addressed in Section 4. To illustrate the applications of the theoretical results, two examples are considered in Section 5. Finally Section 6 gives some concluding remarks.

\section{Problem statement}

We first introduce some notations which will be used in this paper and then state the problem.

Throughout the paper, $\mathbf{R}^{n\times m}$ is the Euclidean space of all $n\times m$ real matrices, $\mathbf{S}^n$ is the space of all $n\times n$ symmetric matrices. We simply write $\mathbf{R}^{n\times m}$ as $\mathbf{R}^n$ when $m=1$. The norm in $\mathbf{R}^n$ is denoted by $|\cdot|$ and the inner product is denoted by $\langle\cdot,\cdot\rangle$. The transpose of vectors or matrices is denoted by the superscript $^\top$. $I$ is the identity matrix with appropriate dimension.

Let $T>0$ be fixed and $\delta>0$ be the time delay, suppose $(\Omega,\mathcal{F},\mathbb{P})$ is a complete probability space, $\{W(t)\}_{t\geq 0}$ is a one-dimensional Brownian motion, and we define the filtration $\{\mathcal{F}_t\}_{t\geq 0}=\sigma\{W(r);0 \leq r\leq t\}$. $\mathbb{E}[\cdot]$ denotes the expectation under the probability measure $\mathbb{P}$ and $\mathbb{E}^{\mathcal{F}_t}[\cdot]:=\mathbb{E}[\cdot|\mathcal{F}_t]$ denotes the conditional expectation, for any $t\in(0,T]$.

\vspace{1mm}

We then introduce some spaces. For a positive integer $p$, we define
\begin{eqnarray*}\begin{aligned}
   &L^p([0,T];\mathbf{R}^{n\times n}):=\bigg\{\mathbf{R}^{n\times n}\mbox{-valued funciton }\phi(t);\ \int_0^T|\phi(t)|^pdt<\infty\bigg\},\\
   &L^{\infty}([0,T];\mathbf{R}^{n\times n}):=\bigg\{\mathbf{R}^{n\times n}\mbox{-valued funciton }\phi(t);\ \sup\limits_{0\leq t\leq T}|\phi(t)|dt<\infty\bigg\},\\
   &C([0,T];\mathbf{R}^n):=\bigg\{\mathbf{R}^n\mbox{-valued continuous funciton }\phi(t);\ \sup\limits_{0\leq t\leq T}|\phi(t)|<\infty\bigg\},\\
   &L^2_\mathcal{F}([0,T];\mathbf{R}^n):=\bigg\{\mathbf{R}^n\mbox{-valued }\mathcal{F}_t\mbox{-adapted process }\phi(t);\ \mathbb{E}\int_0^T|\phi(t)|^2dt<\infty\bigg\},\\
   &L^2_\mathcal{F}(\Omega;C([0,T];\mathbf{R}^n)):=\bigg\{\mathbf{R}^n\mbox{-valued }\mathcal{F}_t\mbox{-adapted process }\phi(t);\
    \mathbb{E}\Big[\sup\limits_{0\leq t\leq T}|\phi(t)|^2\Big]<\infty\bigg\}.
\end{aligned}\end{eqnarray*}

\vspace{1mm}

Consider the following linear controlled system with time delay, which is a {\it stochastic differential delayed equation} (in short, SDDE):
\small
\begin{equation}\label{system equation}\left\{\begin{aligned}
 dX^{u_1,u_2}(t)&=\big[A(t)X^{u_1,u_2}(t)+\bar{A}(t)X^{u_1,u_2}(t-\delta)+\bar{B}_1(t)u_1(t-\delta)+\bar{B}_2(t)u_2(t-\delta)\big]dt\\
                &\quad+\big[C(t)X^{u_1,u_2}(t)+\bar{C}(t)X^{u_1,u_2}(t-\delta)+\bar{D}_1(t)u_1(t-\delta)+\bar{D}_2(t)u_2(t-\delta)\big]dW(t),\\
                &\hspace{10cm} t\in[0,T],\\
  X^{u_1,u_2}(t)&=\varphi(t),\ u_1(t)=\eta_1(t),\ u_2(t)=\eta_2(t),\quad t\in[-\delta,0],
\end{aligned}\right.\end{equation}
\normalsize
and the cost functionals for the leader and the follower are as follows, respectively:
\small
\begin{equation}\begin{aligned}\label{cost}
  &J_i(u_1(\cdot),u_2(\cdot))=\mathbb{E}\bigg[\int_0^T\big[X^{u_1,u_2}(t)^\top Q_i(t)X^{u_1,u_2}(t)+X^{u_1,u_2}(t-\delta)^\top\bar{Q}_i(t)X^{u_1,u_2}(t-\delta)\\
  &\quad+u_i(t)^\top R_i(t)u_i(t)+u_i(t-\delta)^\top\bar{R}_i(t)u_i(t-\delta)\big]dt+X^{u_1,u_2}(T)^\top G_iX^{u_1,u_2}(T)\bigg],\quad i=1,2.
\end{aligned}\end{equation}
\normalsize
In the above, $X(\cdot)\in\textbf{R}^n$ is the state process, $u_1(\cdot)\in\textbf{R}^{k_1}$ and $u_2(\cdot)\in\textbf{R}^{k_2}$ are the control processes of the follower and the leader, respectively. $\varphi(\cdot)\in C([-\delta,0];\textbf{R}^n)$ is the initial trajectory of the state, $\eta_i(\cdot)\in L^2([-\delta,0];\textbf{R}^{k_i})$, $i=1,2$, are the initial trajectories of the follower's and the leader's control, respectively. $A(\cdot),\bar{A}(\cdot),C(\cdot),\bar{C}(\cdot)\in L^{\infty}([0,T];\textbf{R}^{n\times n})$, $\bar{B}_i(\cdot),\bar{D}_i(\cdot)\in L^{\infty}([0,T];\textbf{R}^{n\times k_i})$, $Q_i(\cdot),\bar{Q}_i(\cdot)\in L^{\infty}([0,T];\textbf{S}^n)$, $R_i(\cdot),\bar{R}_i(\cdot)\in L^{\infty}([0,T];\textbf{S}^{k_i})$, and $G_i\in \textbf{S}^n$.

For any $u_1(\cdot)\in L_{\mathcal{F}}^2([0,T];\textbf{R}^{k_1})$, $u_2(\cdot)\in L_{\mathcal{F}}^2([0,T];\textbf{R}^{k_2})$, (\ref{system equation}) admits a unique solution $X^{u_1,u_2}(\cdot)\in L_{\mathcal{F}}^2(\Omega;C([0,T];\textbf{R}^n))$ (see \cite{Mohammed84}) and the cost functionals are well-defined.

Our {\it linear quadratic stochastic Stackelberg differential game with time delay} is the following.
For each choice of the leader $u_2(\cdot)\in\mathcal{U}_2[0,T]\triangleq L_{\mathcal{F}}^2([0,T];\textbf{R}^{k_2})$, the follower would like to choose a strategy $\bar{u}_1(\cdot)\in\mathcal{U}_1[0,T]\triangleq L_{\mathcal{F}}^2([0,T];\textbf{R}^{k_1})$ such that his cost functional $J_1(\bar{u}_1(\cdot),u_2(\cdot))$ is the minimum of $J_1(u_1(\cdot),u_2(\cdot))$ over $u_1(\cdot)\in\mathcal{U}_1[0,T]$. The leader has the ability to know the optimal strategy of the follower $\bar{u}_1(\cdot)$, thus the leader would like to choose a strategy $\bar{u}_2(\cdot)\in\mathcal{U}_2[0,T]$ such that his cost functional $J_2(\bar{u}_1(\cdot),\bar{u}_2(\cdot))$ is the minimum of $J_2(\bar{u}_1(\cdot),u_2(\cdot))$ over $u_2(\cdot)\in\mathcal{U}_2[0,T]$. Strictly speaking, the follower wants to find a map $\bar{\alpha}_1[\cdot]:\mathcal{U}_2[0,T]\rightarrow\mathcal{U}_1[0,T]$ and the leader wants to find a control $\bar{u}_2(\cdot)\in\mathcal{U}_2[0,T]$, such that
\begin{eqnarray*}\left\{\begin{aligned}
  &J_1(\bar{\alpha}_1(u_2(\cdot))(\cdot),u_2(\cdot))=\underset{u_1(\cdot)\in\mathcal{U}_1[0,T]}{\inf}J_1(u_1(\cdot),u_2(\cdot)),\quad \forall u_2(\cdot)\in\mathcal{U}_2[0,T],\\
  &J_2(\bar{\alpha}_1(\bar{u}_2(\cdot))(\cdot),\bar{u}_2(\cdot))=\underset{u_2(\cdot)\in\mathcal{U}_2[0,T]}{\inf}J_2(\bar{\alpha}_1(u_2(\cdot)),u_2(\cdot)).
\end{aligned}\right.\end{eqnarray*}
Then $\bar{u}_1(\cdot)\triangleq\bar{\alpha}_1[\bar{u}_2(\cdot)]$, the pair $(\bar{u}_1(\cdot),\bar{u}_2(\cdot))$ is called an {\it open-loop Stackelberg strategy} to the above game, and the corresponding solution $\bar{X}(\cdot)\equiv X^{\bar{u}_1,\bar{u}_2}(\cdot)$ is called the {\it optimal state trajectory}. The purpose of this paper is to find and characterize the unique open-loop Stackelberg strategy.

\section{Optimization problem of the follower}

In this section, we deal with the optimization problem of the follower, which is a linear quadratic stochastic optimal control problem with time delay, for any choice $u_2(\cdot)$ of the leader.

We give the following detailed statement.

Problem \textbf{(F-DLQ)}: {\it For any $u_2(\cdot)\in\mathcal{U}_2[0,T]$, minimize the cost functional $J_1(u_1(\cdot),u_2(\cdot))$ over $u_1(\cdot)\in\mathcal{U}_1[0,T]$ such that (\ref{system equation}) is satisfied.}

\vspace{1mm}

To solve Problem \textbf{(F-DLQ)}, we first introduce the adjoint equation:
\begin{equation}\left\{\begin{aligned}\label{p-equation}
  dp(t)&=\Big\{-A(t)^\top p(t)-C(t)^\top q(t)-\big[Q_1(t)+\bar{Q}_1(t+\delta)\big]X^{\bar{u}_1,u_2}(t)\\
       &\qquad-\mathbb{E}^{\mathcal{F}_t}\big[\bar{A}(t+\delta)^\top p(t+\delta)+\bar{C}(t+\delta)^\top q(t+\delta)\big]\Big\}dt+q(t)dW(t),\ t\in[0,T],\\
  p(T)&=G_1X^{\bar{u}_1,u_2}(T),\ p(t)=q(t)=0,\ t\in(T,T+\delta].
\end{aligned}\right.\end{equation}
(\ref{p-equation}) is called an \emph{anticipated backward stochastic differential equation} (in short, ABSDE), which has a natural adjoint relation with the SDDE (\ref{system equation}) (see \cite{CW10}). By Peng and Yang \cite{PY09}, (\ref{p-equation}) admits a unique solution $(p(\cdot),q(\cdot))\in L_{\mathcal{F}}^2(\Omega;C([0,T];\textbf{R}^n))\times L_{\mathcal{F}}^2([0,T];\textbf{R}^n)$.

\vspace{1mm}

Then we can obtain the following result.
\begin{mythm}\label{thm3.1}
Assume $\bar{Q}_1(t)=\bar{R}_1(t)=0$ for $t\in [T,T+\delta]$. For any $u_2(\cdot)\in\mathcal{U}_2[0,T]$, let $\bar{u}_1(\cdot)$ be an optimal control of the follower and $X^{\bar{u}_1,u_2}(\cdot)$ be the optimal state trajectory for Problem (\textbf{F-DLQ}), then
\begin{equation}\label{u1-optimal}\begin{aligned}
  &\big[R_1(t)+\bar{R}_1(t+\delta)\big]\bar{u}_1(t)+\mathbb{E}^{\mathcal{F}_t}\big[\bar{B}_1(t+\delta)^Tp(t+\delta)+\bar{D}_1(t+\delta)^Tq(t+\delta)\big]=0,\\
  &\hspace{9cm} a.e.\ t\in[0,T],\ \mathbb{P}\mbox{-}a.s.
\end{aligned}\end{equation}
\end{mythm}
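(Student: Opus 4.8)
The plan is to derive the stationarity condition \eqref{u1-optimal} via a standard convex variational (spike/convex) perturbation argument, using the fact that Problem (\textbf{F-DLQ}) is a convex linear-quadratic problem so that the first-order condition is both necessary and (under the implicit convexity) sufficient. First I would fix an arbitrary admissible perturbation direction $v_1(\cdot)\in\mathcal{U}_1[0,T]$ and consider the family $u_1^\varepsilon(\cdot)=\bar u_1(\cdot)+\varepsilon v_1(\cdot)$ for $\varepsilon\in\mathbf{R}$. Since the dynamics \eqref{system equation} are linear in $(X,u_1)$, the state depends affinely on $\varepsilon$; writing $Y(\cdot)$ for the Gateaux derivative $\tfrac{d}{d\varepsilon}X^{u_1^\varepsilon,u_2}(\cdot)\big|_{\varepsilon=0}$, the process $Y$ solves the (homogeneous-initial-data) variational SDDE
\begin{equation*}\left\{\begin{aligned}
 dY(t)&=\big[A(t)Y(t)+\bar A(t)Y(t-\delta)+\bar B_1(t)v_1(t-\delta)\big]dt\\
      &\quad+\big[C(t)Y(t)+\bar C(t)Y(t-\delta)+\bar D_1(t)v_1(t-\delta)\big]dW(t),\quad t\in[0,T],\\
  Y(t)&=0,\ v_1(t)=0,\quad t\in[-\delta,0].
\end{aligned}\right.\end{equation*}
Differentiating the cost $J_1(u_1^\varepsilon(\cdot),u_2(\cdot))$ at $\varepsilon=0$ and using optimality of $\bar u_1$ gives the necessary condition that the first variation $\tfrac{d}{d\varepsilon}J_1\big|_{\varepsilon=0}=0$ for every $v_1$; this yields
\begin{equation*}\begin{aligned}
 0=\mathbb{E}\bigg[&\int_0^T\!\!\big[2\langle Q_1(t)X^{\bar u_1,u_2}(t),Y(t)\rangle+2\langle\bar Q_1(t)X^{\bar u_1,u_2}(t-\delta),Y(t-\delta)\rangle\\
  &\quad+2\langle R_1(t)\bar u_1(t),v_1(t)\rangle+2\langle\bar R_1(t)\bar u_1(t-\delta),v_1(t-\delta)\rangle\big]dt+2\langle G_1X^{\bar u_1,u_2}(T),Y(T)\rangle\bigg].
\end{aligned}\end{equation*}

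The main step is then to eliminate the auxiliary process $Y$ in favour of the adjoint pair $(p,q)$ solving the ABSDE \eqref{p-equation}. To do this I would apply Itô's formula to $\langle p(t),Y(t)\rangle$ on $[0,T]$ and take expectations; the terminal term reproduces $\mathbb{E}\langle G_1X^{\bar u_1,u_2}(T),Y(T)\rangle$ through the condition $p(T)=G_1X^{\bar u_1,u_2}(T)$, while the drift and diffusion couplings generate exactly the state-dependent quadratic terms above, leaving only the control-direction terms. The delicate part here is handling the anticipated (advanced) terms $\mathbb{E}^{\mathcal{F}_t}[\bar A(t+\delta)^\top p(t+\delta)+\cdots]$ in the adjoint drift together with the delayed terms $\bar A(t)Y(t-\delta)+\cdots$ in $Y$: these are reconciled by a time-shift (change of variables $t\mapsto t+\delta$ or $t\mapsto t-\delta$) in the corresponding integrals and by the tower property of conditional expectation, $\mathbb{E}[\langle\mathbb{E}^{\mathcal{F}_t}[\,\cdot\,],\,Y(t)\rangle]=\mathbb{E}[\langle\,\cdot\,,Y(t)\rangle]$ since $Y(t)$ is $\mathcal{F}_t$-measurable. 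This is the duality relation between the SDDE \eqref{system equation} and the ABSDE \eqref{p-equation} recorded from \cite{CW10}, and it is the heart of the computation.

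Carrying out these shifts is where the boundary assumption $\bar Q_1(t)=\bar R_1(t)=0$ on $[T,T+\delta]$ and the terminal convention $p(t)=q(t)=0$ on $(T,T+\delta]$ become essential: the substitutions turn, e.g., $\int_0^T\langle\bar Q_1(t)X(t-\delta),Y(t-\delta)\rangle dt$ into $\int_{-\delta}^{T-\delta}\langle\bar Q_1(s+\delta)X(s),Y(s)\rangle ds$, whose excess pieces over $[-\delta,0]$ vanish because $Y\equiv 0$ there and whose excess over $[T-\delta,T]$ is absorbed into the $\bar Q_1(t+\delta)$ and $\bar R_1(t+\delta)$ terms appearing in \eqref{p-equation} and \eqref{u1-optimal}, with the stated vanishing on $[T,T+\delta]$ ensuring no spurious boundary contributions survive. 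After collecting terms, the duality identity reduces the first variation to
\begin{equation*}
 0=\mathbb{E}\int_0^T\Big\langle\big[R_1(t)+\bar R_1(t+\delta)\big]\bar u_1(t)+\mathbb{E}^{\mathcal{F}_t}\big[\bar B_1(t+\delta)^\top p(t+\delta)+\bar D_1(t+\delta)^\top q(t+\delta)\big],\,v_1(t)\Big\rangle\,dt.
\end{equation*}
Since $v_1(\cdot)\in\mathcal{U}_1[0,T]$ is arbitrary and the integrand's first factor is $\mathcal{F}_t$-adapted (again by the tower property, the $\mathbb{E}^{\mathcal{F}_t}$ makes the advanced terms adapted), the fundamental lemma of the calculus of variations forces the first factor to vanish for a.e.\ $t$, $\mathbb{P}$-a.s., which is precisely \eqref{u1-optimal}. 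I expect the bookkeeping of the shifted integration limits and the matching of each shifted quadratic term against the corresponding coefficient in the adjoint equation to be the main obstacle; everything else is routine once the duality relation is set up correctly.
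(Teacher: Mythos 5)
Your proposal is correct and takes essentially the same route as the paper: a convex perturbation of $\bar u_1$, Itô's formula pairing the adjoint $(p,q)$ of the ABSDE \eqref{p-equation} against the (difference/variational) state process, time-shifts justified by $\bar Q_1=\bar R_1=0$ on $[T,T+\delta]$ together with the vanishing of $(p,q)$ on $(T,T+\delta]$, and arbitrariness of the perturbation direction to conclude \eqref{u1-optimal}. The only cosmetic difference is that you set a two-sided Gateaux derivative to zero via an explicit variational process $Y$, whereas the paper perturbs with $\varepsilon(v_1-\bar u_1)$, $\varepsilon\in[0,1]$, and passes from the one-sided inequality to equality by arbitrariness of $v_1$ --- equivalent here by linearity of the dynamics and the fact that $\mathcal{U}_1[0,T]$ is a linear space.
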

\begin{proof}
For any $v_1(\cdot)\in\mathcal{U}_1[0,T]$ with $v_1(t)=\eta_1(t)$ for $t\in[-\delta,0]$, define $u_1^\varepsilon(\cdot)=\bar{u}_1(\cdot)+\varepsilon(v_1(\cdot)-\bar{u}_1(\cdot))$, $\varepsilon\in[0,1]$. Suppose $X^{v_1,u_2}(\cdot)$ and $X^{u_1^\varepsilon,u_2}(\cdot)$ are the state trajectories corresponding to $v_1(\cdot)$ and $u_1^\varepsilon(\cdot)$, respectively. Then we have
\begin{equation}\begin{aligned}\label{eq3.3}
  &J_1(u_1^\varepsilon(\cdot),u_2(\cdot))-J_1(\bar{u}_1(\cdot),u_2(\cdot))\\
  &=2\varepsilon\mathbb{E}\bigg\{\int_0^T\Big[\big\langle Q_1(t)X^{\bar{u}_1,u_2}(t),X^{v_1,u_2}(t)-X^{\bar{u}_1,u_2}(t)\big\rangle\\
  &\qquad\qquad\qquad+\big\langle\bar{Q}_1(t)X^{\bar{u}_1,u_2}(t-\delta),X^{v_1,u_2}(t-\delta)-X^{\bar{u}_1,u_2}(t-\delta)\big\rangle\\
  &\qquad\qquad\qquad+\big\langle R_1(t)\bar{u}_1(t),v_1(t)-\bar{u}_1(t)\big\rangle+\big\langle\bar{R}_1(t)\bar{u}_1(t-\delta),v_1(t-\delta)-\bar{u}_1(t-\delta)\big\rangle\Big]dt\\
  &\qquad\qquad+\big\langle G_1X^{\bar{u}_1,u_2}(T),X^{v_1,u_2}(T)-X^{\bar{u}_1,u_2}(T)\big\rangle\bigg\}+\varepsilon^2\Big\{\cdots\Big\}.
\end{aligned}\end{equation}
Applying It\^o's formula to $\big\langle p(\cdot),X^{v_1,u_2}(\cdot)-X^{\bar{u}_1,u_2}(\cdot)\big\rangle$, and substituting it into (\ref{eq3.3}), we deduce
\begin{equation}\begin{aligned}\label{eq3.4}
  &J_1(u_1^\varepsilon(\cdot),u_2(\cdot))-J_1(\bar{u}_1(\cdot),u_2(\cdot))\\
  &=2\varepsilon\mathbb{E}\int_0^T\Big[\big\langle\big[R_1(t)+\bar{R}_1(t+\delta)\big]\bar{u}_1(t)+\mathbb{E}^{\mathcal{F}_t}\big[\bar{B}_1(t+\delta)^Tp(t+\delta)\\
  &\qquad\qquad\quad+\bar{D}_1(t+\delta)^Tq(t+\delta)\big],v_1(t)-\bar{u}_1(t)\big\rangle\Big]dt+\varepsilon^2\Big\{\cdots\Big\}.
\end{aligned}\end{equation}
Dividing both sides of (\ref{eq3.4}) by $\varepsilon$, we derive
\begin{equation*}\begin{aligned}
  0&\geq\frac{1}{\varepsilon}\big[J_1(u_1^\varepsilon(\cdot),u_2(\cdot))-J_1(\bar{u}_1(\cdot),u_2(\cdot))\big]\\
  &=2\mathbb{E}\int_0^T\Big[\big\langle\big[R_1(t)+\bar{R}_1(t+\delta)\big]\bar{u}_1(t)+\mathbb{E}^{\mathcal{F}_t}\big[\bar{B}_1(t+\delta)^Tp(t+\delta)\\
  &\qquad\qquad\quad+\bar{D}_1(t+\delta)^Tq(t+\delta)\big],v_1(t)-\bar{u}_1(t)\big\rangle\Big]dt+\varepsilon\Big\{\cdots\Big\}.
\end{aligned}\end{equation*}
Finally letting $\varepsilon\rightarrow 0$, then due to the arbitrariness chosen of $v_1(\cdot)$, we complete the proof.
\end{proof}

\vspace{1mm}

Next we try to give the feedback expression of $\bar{u}_1(\cdot)$. For this target, for some technical reason, we have to impose the following assumptions on the coefficients of (\ref{system equation}) and (\ref{cost}):
\begin{equation*}
  \textbf{(A1)}\left\{\begin{aligned}
  &C(t)^\top P_1(t)\bar{D}_1(t)+P_1(t)\bar{B}_1(t)=0,\quad \ t\in[0,T],\\
  &C(t)^\top P_1(t)\bar{C}(t)+P_1(t)\bar{A}(t)=0,\quad \ t\in[0,T],
  \end{aligned}\right.
\end{equation*}
where $P_1(\cdot)$ is the solution to the following Pseudo-Riccati equation:
\begin{equation}\left\{\begin{aligned}\label{P1-equation}
  \dot{P}_1(t)&=-P_1(t)A(t)-A(t)^\top P_1(t)-C(t)^\top P_1(t)C(t)\\
              &\quad-\bar{C}(t+\delta)^\top P_1(t+\delta)\bar{C}(t+\delta)-Q_1(t)-\bar{Q}_1(t+\delta)\\
              &\quad+\bar{C}(t+\delta)^\top P_1(t+\delta)\bar{D}_1(t+\delta)\Omega_1^{-1}(t)\bar{D}_1(t+\delta)^\top P_1(t+\delta)\bar{C}(t+\delta),\ t\in[0,T],\\
        P_1(T)&=G_1,\ P_1(t)=0,\quad t\in(T,T+\delta],\\
   \Omega_1(t)&\triangleq R_1(t)+\bar{R}_1(t+\delta)+\bar{D}_1(t+\delta)^\top P_1(t+\delta)\bar{D}_1(t+\delta)>0,\ \forall\ t\in[0,T].
\end{aligned}\right.\end{equation}
And we choose $(\zeta_1(\cdot),\bar{\zeta}_1(\cdot))$ satisfying the following ABSDE:
\begin{equation}\left\{\begin{aligned}\label{zeta1-equation}
  d\zeta_1(t)&=\Big\{-A(t)^\top\zeta_1(t)-C(t)^\top\bar{\zeta}_1(t)-\mathbb{E}^{\mathcal{F}_t}\big[\bar{A}(t+\delta)^\top\zeta_1(t+\delta)+\bar{C}(t+\delta)^\top\bar{\zeta}_1(t+\delta)\big]\\
             &\qquad+\bar{C}(t+\delta)^\top P_1(t+\delta)\bar{D}_1(t+\delta)\Omega_1^{-1}(t)\big\{\mathbb{E}^{\mathcal{F}_t}\big[\bar{B}_1(t+\delta)^\top\zeta_1(t+\delta)\\
             &\qquad\quad+\bar{D}_1(t+\delta)^\top\bar{\zeta}_1(t+\delta)\big]+\bar{D}_1(t+\delta)^\top P_1(t+\delta)\bar{D}_2(t+\delta)u_2(t)\big\}\\
             &\qquad-\big[P_1(t)\bar{B}_2(t)+C(t)^\top P_1(t)\bar{D}_2(t)\big]u_2(t-\delta)\\
             &\qquad-\bar{C}(t+\delta)^\top P_1(t+\delta)\bar{D}_2(t+\delta)u_2(t)\Big\}dt+\bar{\zeta}_1(t)dW(t),\ t\in[0,T],\\
   \zeta_1(t)&=\bar{\zeta}_1(t)=0,\ t\in(T,T+\delta].
\end{aligned}\right.\end{equation}

\begin{Remark}\label{rem3.1}
We point out that (\ref{P1-equation}) is not a Riccati-type equation. In fact, (\ref{P1-equation}) can be solved step by step as a linear {\it ordinary differential equation} (in short, ODE). Furthermore, its unique solvability can be guaranteed by the boundedness of its coefficients. In details, for $t\in(T-\delta,T]$, (\ref{P1-equation}) becomes
\begin{equation*}\left\{\begin{aligned}
  \dot{P}_1(t)&=-P_1(t)A(t)-A(t)^\top P_1(t)-C(t)^\top P_1(t)C(t)-Q_1(t)-\bar{Q}_1(t+\delta),\ t\in(T-\delta,T],\\
        P_1(T)&=G_1.
\end{aligned}\right.\end{equation*}
Obviously, it admits a unique solution $P_1(\cdot)\in C((T-\delta,T];\textbf{S}^n)$. Repeating the above steps on $(T-2\delta,T-\delta]$, $(T-3\delta,T-2\delta]$, $\cdots$, we can derive the solution to (\ref{P1-equation}) on $[0,T]$. On the other hand, due to the boundedness of the coefficients, ABSDE (\ref{zeta1-equation}) has a unique solution $(\zeta_1(\cdot),\bar{\zeta}_1(\cdot))\in L_{\mathcal{F}}^2(\Omega;C([0,T];\textbf{R}^n))\times L_{\mathcal{F}}^2([0,T];\textbf{R}^n)$ for any $u_2(\cdot)\in\mathcal{U}_2[0,T]$ (see \cite{PY09}).
\end{Remark}

\begin{Remark}\label{rem3.1-1}
From Remark \ref{rem3.1}, we know (\ref{P1-equation}) is essentially a linear ODE and hence is easier to solve than the Riccati equation (6) in \cite{XSZ18}.
\end{Remark}

\vspace{1mm}

Next we can derive the sufficient and necessary conditions of the solvability for Problem \textbf{(F-DLQ)}.

\begin{mythm}\label{thm3.2}
Let \textbf{(A1)} hold, and $\bar{Q}_1(t)=\bar{R}_1(t)=0$ for $t\in [T,T+\delta]$. Let $P_1(\cdot)$ satisfy (\ref{P1-equation}) and $(\zeta_1(\cdot),\bar{\zeta}_1(\cdot))$ satisfy (\ref{zeta1-equation}). Then for any $u_2(\cdot)\in\mathcal{U}_2[0,T]$, Problem \textbf{(F-DLQ)} is solvable
and the optimal control $\bar{u}_1(\cdot)$ is of the following state feedback form:
\begin{equation}\begin{aligned}\label{u1-feedback}
  \bar{u}_1(t)=&-\Omega_1^{-1}(t)\Big\{\bar{D}_1(t+\delta)^\top P_1(t+\delta)\bar{C}(t+\delta)X^{\bar{u}_1,u_2}(t)\\
               &\qquad\qquad+\mathbb{E}^{\mathcal{F}_t}\big[\bar{B}_1(t+\delta)^\top\zeta_1(t+\delta)+\bar{D}_1(t+\delta)^\top\bar{\zeta}_1(t+\delta)\big]\\
               &\qquad\qquad+\bar{D}_1(t+\delta)^\top P_1(t+\delta)\bar{D}_2(t+\delta)u_2(t)\Big\},\ a.e.\  t\in[0,T],\ \mathbb{P}\mbox{-}a.s.
\end{aligned}\end{equation}
Moreover, the optimal cost can be obtained as follows:
\begin{equation}\begin{aligned}\label{follower-optimal-cost}
  &\underset{u_1(\cdot)\in\mathcal{U}_1[0,T]}{\inf}J_1(u_1(\cdot),u_2(\cdot))=J_1(\bar{u}_1(\cdot),u_2(\cdot))\\
  &=\mathbb{E}\bigg\{\big\langle P_1(0)\varphi(0)+2\zeta_1(0),\varphi(0)\big\rangle+\int_{-\delta}^0I_\delta(t)dt+\int_0^T\Big[\big\langle P_1(t)\bar{D}_2(t)u_2(t-\delta),\bar{D}_2(t)u_2(t-\delta)\big\rangle\\
  &\qquad+2\langle\bar{B}_2(t)^\top\zeta_1(t)+\bar{D}_2(t)^\top\bar{\zeta}_1(t),u_2(t-\delta)\big\rangle-\big|\Omega_1^{-\frac{1}{2}}(t)\mathbb{E}^{\mathcal{F}_t}\big[\bar{B}_1(t+\delta)^\top\zeta_1(t+\delta)\\
  &\qquad+\bar{D}_1(t+\delta)^\top\bar{\zeta}_1(t+\delta)
   +\bar{D}_1(t+\delta)^\top P_1(t+\delta)\bar{D}_2(t+\delta)u_2(t)\big]\big|^2\Big]dt\bigg\},
\end{aligned}\end{equation}
where
\begin{equation}\begin{aligned}\label{I-delta}
  I_\delta(t)&\triangleq\big\langle\varphi(t),\bar{C}(t+\delta)^\top P_1(t+\delta)\bar{C}(t+\delta)\varphi(t)+\bar{Q}_1(t+\delta)\varphi(t)+\bar{C}(t+\delta)^\top\bar{\zeta}_1(t+\delta)\\
             &\qquad+\bar{A}(t+\delta)^\top\zeta_1(t+\delta)\big\rangle+\big\langle\eta_1(t),2\bar{B}_1(t+\delta)^\top\zeta_1(t+\delta)+\bar{D}_1(t+\delta)^\top\bar{\zeta}_1(t+\delta)\\
             &\qquad+\bar{R}_1(t+\delta)\eta_1(t)+\bar{D}_1(t+\delta)^\top P_1(t+\delta)\bar{D}_1(t+\delta)\eta_1(t)\big\rangle\\
             &\quad+2\big\langle\eta_1(t),\bar{D}_1(t)^\top P_1(t)\bar{C}(t)\varphi(t)\big\rangle+2\big\langle\eta_1(t),\bar{D}_1(t+\delta)^\top P_1(t+\delta)\bar{D}_2(t+\delta)\eta_2(t)\big\rangle\\
             &\quad+2\big\langle\eta_2(t),\bar{D}_2(t+\delta)^\top P_1(t+\delta)\bar{C}(t+\delta)\varphi(t)\big\rangle.
\end{aligned}\end{equation}
\end{mythm}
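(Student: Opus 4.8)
The plan is to establish solvability, the feedback form, and the cost formula in one stroke by a completion-of-squares argument tailored to the delayed dynamics. The equations (\ref{P1-equation}) and (\ref{zeta1-equation}) are engineered precisely so that $J_1(u_1(\cdot),u_2(\cdot))$ can be recast as a $u_1$-independent remainder plus a nonnegative quadratic form in $u_1(\cdot)$ that vanishes exactly along (\ref{u1-feedback}). Concretely, I would apply It\^o's formula to $\langle P_1(t)X^{u_1,u_2}(t),X^{u_1,u_2}(t)\rangle$ and to the linear form $2\langle\zeta_1(t),X^{u_1,u_2}(t)\rangle$ over $[0,T]$, take expectations, and add the resulting identities to (\ref{cost}). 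The terminal value $P_1(T)=G_1$ matches the terminal weight $G_1$ in the cost, so the endpoint contributions at $t=T$ cancel, leaving $\langle P_1(0)\varphi(0),\varphi(0)\rangle$ and $2\langle\zeta_1(0),\varphi(0)\rangle$ as the first boundary terms of (\ref{follower-optimal-cost}).

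The step demanding the most care --- and the main obstacle --- is the bookkeeping of the delay and anticipation terms. Every delayed running term such as $\mathbb{E}\int_0^T X^{u_1,u_2}(t-\delta)^\top\bar Q_1(t)X^{u_1,u_2}(t-\delta)\,dt$ must be split at $0$ and the piece on $[0,T-\delta]$ re-indexed by $s=t-\delta$, converting the coefficient $\bar Q_1(t)$ into the anticipated $\bar Q_1(s+\delta)$; the hypotheses $\bar Q_1=\bar R_1=0$ on $[T,T+\delta]$ and $P_1=0$ on $(T,T+\delta]$ are exactly what permit extending every shifted integral back to $[0,T]$ without boundary error. Symmetrically, the anticipated terms produced by the $\langle P_1 X,dX\rangle$ and $\langle\zeta_1,dX\rangle$ computations are reconciled against the coefficients of (\ref{P1-equation}) and (\ref{zeta1-equation}) through the tower property of $\mathbb{E}^{\mathcal{F}_t}[\cdot]$, i.e. the SDDE/ABSDE adjoint relation already used for (\ref{p-equation}). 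In this reduction assumption (\textbf{A1}) is invoked to annihilate the cross terms pairing the current state $X^{u_1,u_2}(t)$ with the delayed quantities $X^{u_1,u_2}(t-\delta)$ and $u_1(t-\delta)$, namely $(P_1\bar A+C^\top P_1\bar C)$ and $(P_1\bar B_1+C^\top P_1\bar D_1)$; the analogous $u_2(t-\delta)$ cross term is not assumed to vanish but is instead absorbed into the drift of (\ref{zeta1-equation}). All leftover contributions supported on $[-\delta,0]$, carrying the initial data $\varphi$, $\eta_1$ and $\eta_2$, are collected into the quantity $I_\delta(t)$ of (\ref{I-delta}).

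After this rearrangement the quadratic-in-$X$ terms collapse onto the Pseudo-Riccati ODE (\ref{P1-equation}) --- in particular the feedback's state-dependent gain $\bar D_1^\top P_1\bar C$ is exactly accounted for by the nonlinear term $\bar C^\top P_1\bar D_1\Omega_1^{-1}\bar D_1^\top P_1\bar C$ --- and what remains of the integrand is a quadratic form in $u_1(t)$ with leading coefficient $\Omega_1(t)>0$ and an affine part built from $\mathbb{E}^{\mathcal{F}_t}[\bar B_1^\top\zeta_1+\bar D_1^\top\bar\zeta_1]$, $\bar D_1^\top P_1\bar C\,X^{u_1,u_2}$ and $\bar D_1^\top P_1\bar D_2 u_2$, together with the explicit $u_2$-driven terms appearing in (\ref{follower-optimal-cost}). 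Completing the square in $u_1(t)$ then gives
\[
  J_1(u_1(\cdot),u_2(\cdot))=J_1(\bar u_1(\cdot),u_2(\cdot))+\mathbb{E}\int_0^T\big|\Omega_1^{\frac12}(t)\big(u_1(t)-\bar u_1(t)\big)\big|^2\,dt,
\]
with $\bar u_1(\cdot)$ the feedback (\ref{u1-feedback}) and $J_1(\bar u_1(\cdot),u_2(\cdot))$ the expression (\ref{follower-optimal-cost}), in which the subtracted square $-\big|\Omega_1^{-\frac12}\mathbb{E}^{\mathcal{F}_t}[\,\cdots\,]\big|^2$ is precisely the non-state part of the completed gain (the state part having been consumed by the Riccati nonlinearity). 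Since $\Omega_1(t)>0$, the integral is nonnegative and vanishes if and only if $u_1=\bar u_1$, which proves that Problem (\textbf{F-DLQ}) is solvable with minimizer (\ref{u1-feedback}) and optimal value (\ref{follower-optimal-cost}).

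As a consistency cross-check, one may instead derive the same feedback by positing the decoupling ansatz $p(t)=P_1(t)X^{\bar u_1,u_2}(t)+\zeta_1(t)$ with the corresponding expression for $q(t)$, verifying that (\ref{p-equation}) forces the Pseudo-Riccati equation (\ref{P1-equation}) and the ABSDE (\ref{zeta1-equation}), and then inserting the result into the stationarity condition (\ref{u1-optimal}) of Theorem \ref{thm3.1}; this reproduces the gains of (\ref{u1-feedback}) and confirms that the necessary condition is also sufficient here.
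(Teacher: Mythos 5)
Your proposal is correct and follows essentially the same route as the paper: the paper's proof likewise applies It\^o's formula to $\big\langle P_1(\cdot)X^{u_1,u_2}(\cdot),X^{u_1,u_2}(\cdot)\big\rangle$ and $2\big\langle\zeta_1(\cdot),X^{u_1,u_2}(\cdot)\big\rangle$, invokes \textbf{(A1)} to annihilate the delayed cross terms, and completes the square in $u_1$ with leading coefficient $\Omega_1(t)>0$, yielding exactly the decomposition you describe. One small imprecision: since the completed square involves the state $X^{u_1,u_2}$ (which itself depends on $u_1$), your displayed identity should be read with the residual square written, as in the paper, in the form $\Omega_1(t)\big|u_1(t)+\Omega_1^{-1}(t)\big\{\bar{D}_1(t+\delta)^\top P_1(t+\delta)\bar{C}(t+\delta)X^{u_1,u_2}(t)+\cdots\big\}\big|^2$ rather than literally as $\big|\Omega_1^{\frac12}(t)(u_1(t)-\bar{u}_1(t))\big|^2$; it vanishes precisely along the closed-loop relation (\ref{u1-feedback}), which is what your earlier phrasing correctly asserts.
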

\begin{proof}
For any $u_1(\cdot)\in\mathcal{U}_1[0,T]$, applying It\^o's formula to $\big\langle P_1(\cdot)X^{u_1,u_2}(\cdot),X^{u_1,u_2}(\cdot)\big\rangle$ and $2\big\langle\zeta_1(\cdot),\\X^{u_1,u_2}(\cdot)\big\rangle$, with some computation, by \textbf{(A1)} we deduce
\begin{equation*}\begin{aligned}
  &J_1(u_1(\cdot),u_2(\cdot))\\
  &=\mathbb{E}\bigg\{\big\langle P_1(0)\varphi(0)+2\zeta_1(0),\varphi(0)\big\rangle+\int_{-\delta}^0I_\delta(t)dt+\int_0^T\Big[\big\langle P_1(t)\bar{D}_2(t)u_2(t-\delta),\bar{D}_2(t)u_2(t-\delta)\big\rangle\\
  &\qquad+2\big\langle\bar{B}_2(t)^\top\zeta_1(t)+\bar{D}_2(t)^\top\bar{\zeta}_1(t),u_2(t-\delta)\big\rangle-\big|\Omega_1^{-\frac{1}{2}}(t)\mathbb{E}^{\mathcal{F}_t}\big[\bar{B}_1(t+\delta)^\top\zeta_1(t+\delta)\\
  &\qquad\quad+\bar{D}_1(t+\delta)^\top\bar{\zeta}_1(t+\delta)+\bar{D}_1(t+\delta)^\top P_1(t+\delta)\bar{D}_2(t+\delta)u_2(t)\big]\big|^2\\
  &\qquad+\Omega_1(t)\big|u_1(t)+\Omega_1^{-1}(t)\big\{\bar{D}_1(t+\delta)^\top P_1(t+\delta)\bar{C}(t+\delta)X^{u_1,u_2}(t)+\mathbb{E}^{\mathcal{F}_t}\big[\bar{B}_1(t+\delta)^\top\zeta_1(t+\delta)\\
  &\qquad\quad+\bar{D}_1(t+\delta)^\top\bar{\zeta}_1(t+\delta)\big]+\bar{D}_1(t+\delta)^\top P_1(t+\delta)\bar{D}_2(t+\delta)u_2(t)\big\}\big|^2\Big]dt\bigg\},
\end{aligned}\end{equation*}
which implies (\ref{u1-feedback}) is the optimal control and (\ref{follower-optimal-cost}) holds.
\end{proof}

\begin{Remark}\label{rem3.2}
In order to overcome the difficulty caused by the time delay, in this paper we impose the assumption \textbf{(A1)}. Otherwise, we will get the feedback in the form of
\begin{equation*}
  \bar{u}_1(t)=\{\cdots\}X^{\bar{u}_1,u_2}(t)+\{\cdots\}\mathbb{E}^{\mathcal{F}_t}[X^{\bar{u}_1,u_2}(t+\delta)],
\end{equation*}
which is more complex than (\ref{u1-feedback}) (in which the terms like $\mathbb{E}^{\mathcal{F}_t}[X^{\bar{u}_1,u_2}(t+\delta)]$ disappears), and leads to our inability to deal with the optimization problem for the leader. In the future we will consider some weaker assumptions than \textbf{(A1)} to deal with this problem.
\end{Remark}

\section{Optimization problem of the leader}

In this section, we will address the optimization problem of the leader, which is a linear quadratic stochastic optimal control problem with a state equation formed by an SDDE and an ABSDE.

\vspace{1mm}

Now the follower takes the optimal control $\bar{u}_1(\cdot)$ of form $(\ref{u1-feedback})$, consequently the leader has the following state equation:
\begin{equation}\label{leader state}\left\{\begin{aligned}
 dX^{\bar{u}_1,u_2}(t)&=\Big\{\hat{A}_1(t)X^{\bar{u}_1,u_2}(t)+\hat{A}_2(t)X^{\bar{u}_1,u_2}(t-\delta)+\hat{B}(t)u_2(t-\delta)+\hat{F}(t)\mathbb{E}^{\mathcal{F}_{t-\delta}}[\zeta_1(t)]\\
                      &\qquad+\hat{H}(t)\mathbb{E}^{\mathcal{F}_{t-\delta}}[\bar{\zeta}_1(t)]+\bar{B}_1(t)\eta_1(t-\delta)I_{[0,\delta]}(t)\Big\}dt\\
                      &\quad+\Big\{\hat{C}_1(t)X^{\bar{u}_1,u_2}(t)+\hat{C}_2(t)X^{\bar{u}_1,u_2}(t-\delta)+\hat{D}(t)u_2(t-\delta)+\hat{K}(t)\mathbb{E}^{\mathcal{F}_{t-\delta}}[\zeta_1(t)]\\
                      &\qquad+\hat{M}(t)\mathbb{E}^{\mathcal{F}_{t-\delta}}[\bar{\zeta}_1(t)]+\bar{D}_1(t)\eta_1(t-\delta)I_{[0,\delta]}(t)\Big\}dW(t),\\
           d\zeta_1(t)&=\Big\{-\hat{A}_1(t)^\top\zeta_1(t)-\hat{C}_1(t)^\top\bar{\zeta}_1(t)-\mathbb{E}^{\mathcal{F}_t}[\tilde{A}_2(t+\delta)^\top\zeta_1(t+\delta)+\tilde{C}_2(t+\delta)^\top\bar{\zeta}_1(t+\delta)]\\
                      &\qquad+\hat{N}_1(t)u_2(t)+\hat{N}_2(t)u_2(t-\delta)\Big\}dt+\bar{\zeta}_1(t)dW(t),\ t\in[0,T],\\
  X^{\bar{u}_1,u_2}(t)&=\varphi(t),\ u_2(t)=\eta_2(t),\ t\in[-\delta,0],\\
            \zeta_1(t)&=\bar{\zeta}_1(t)=0,\ t\in[T,T+\delta],
\end{aligned}\right.\end{equation}
where
\begin{equation*}\left\{\begin{aligned}
  &\hat{A}_1(t)\triangleq A(t),\qquad \hat{C}_1(t)\triangleq C(t),\qquad \hat{A}_2(t)\triangleq\bar{A}(t)-\bar{B}_1(t)I_{[\delta,T]}(t)\Omega_1^{-1}(t-\delta)\bar{D}_1(t)^\top P_1(t)\bar{C}(t),\\
  &\tilde{A}_2(t)\triangleq\bar{A}(t)-\bar{B}_1(t)\Omega_1^{-1}(t-\delta)\bar{D}_1(t)^\top P_1(t)\bar{C}(t),\\
  &\hat{B}(t)\triangleq\bar{B}_2(t)-\bar{B}_1(t)I_{[\delta,T]}(t)\Omega_1^{-1}(t-\delta)\bar{D}_1(t)^\top P_1(t)\bar{D}_2(t),\\
  &\hat{C}_2(t)\triangleq\bar{C}(t)-\bar{D}_1(t)I_{[\delta,T]}(t)\Omega_1^{-1}(t-\delta)\bar{D}_1(t)^\top P_1(t)\bar{C}(t),\\
  &\tilde{C}_2(t)\triangleq\bar{C}(t)-\bar{D}_1(t)\Omega_1^{-1}(t-\delta)\bar{D}_1(t)^\top P_1(t)\bar{C}(t),\\
  &\hat{D}(t)\triangleq\bar{D}_2(t)-\bar{D}_1(t)I_{[\delta,T]}(t)\Omega_1^{-1}(t-\delta)\bar{D}_1(t)^\top P_1(t)\bar{D}_2(t),\\
  &\hat{F}(t)\triangleq-\bar{B}_1(t)I_{[\delta,T]}(t)\Omega_1^{-1}(t-\delta)\bar{B}_1(t)^\top,\quad\quad\hat{H}(t)\triangleq-\bar{B}_1(t)I_{[\delta,T]}(t)\Omega_1^{-1}(t-\delta)\bar{D}_1(t)^\top,\\
  &\hat{K}(t)\triangleq-\bar{D}_1(t)I_{[\delta,T]}(t)\Omega_1^{-1}(t-\delta)\bar{B}_1(t)^\top,\quad\quad\hat{M}(t)\triangleq-\bar{D}_1(t)I_{[\delta,T]}(t)\Omega_1^{-1}(t-\delta)\bar{D}_1(t)^\top,\\
  &\hat{N}_1(t)\triangleq-\bar{C}(t+\delta)^\top P_1(t+\delta)\big[I-\bar{D}_1(t+\delta)\Omega_1^{-1}(t)\bar{D}_1(t+\delta)^\top P_1(t+\delta)\big]\bar{D}_2(t+\delta),\\
  &\hat{N}_2(t)\triangleq-P_1(t)\bar{B}_2(t)-C(t)^\top P_1(t)\bar{D}_2(t).
\end{aligned}\right.\end{equation*}

\vspace{1mm}

Now we formulate the optimization problem of the leader as follows.

Problem \textbf{(L-DLQ)}: {\it Minimize the cost functional $J_2(\bar{u}_1(\cdot),u_2(\cdot))$ over $u_2(\cdot)\in\mathcal{U}_2[0,T]$ such that (\ref{leader state}) is satisfied.}

\vspace{1mm}

First we introduce the adjoint equation as follows:
\begin{equation}\left\{\begin{aligned}\label{p2-equation}
  d\xi(t)&=\Big\{\hat{A}_1(t)\xi(t)+\tilde{A}_2(t)\xi(t-\delta)+\mathbb{E}^{\mathcal{F}_{t-\delta}}\big[\hat{F}(t)^\top p_2(t)+\hat{K}(t)^\top q_2(t)\big]\Big\}dt\\
         &\quad+\Big\{\hat{C}_1(t)\xi(t)+\tilde{C}_2(t)\xi(t-\delta)+\mathbb{E}^{\mathcal{F}_{t-\delta}}\big[\hat{H}(t)^\top p_2(t)+\hat{M}(t)^\top q_2(t)\big]\Big\}dW(t),\\
  dp_2(t)&=\Big\{-\hat{A}_1(t)^\top p_2(t)-\hat{C}_1(t)^\top q_2(t)-\big[Q_2(t)+\bar{Q}_2(t+\delta)\big]\bar{X}(t)\\
         &\qquad-\mathbb{E}^{\mathcal{F}_t}\big[\hat{A}_2(t+\delta)^\top p_2(t+\delta)+\hat{C}_2(t+\delta)^\top q_2(t+\delta)\big]\Big\}dt+q_2(t)dW(t),\ t\in[0,T],\\
   \xi(t)&=0,\ t\in[-\delta,0],\\
   p_2(T)&=G_2\bar{X}(T),\ p_2(t)=0,\ t\in(T,T+\delta],\ q_2(t)=0,\ t\in[T,T+\delta],
\end{aligned}\right.\end{equation}
where $\bar{X}(\cdot)\equiv X^{\bar{u}_1,\bar{u}_2}(\cdot)$. The adjoint equation (\ref{p2-equation}) is formed by an SDDE and an ABSDE, which is partial coupled. By \cite{PY09} the second equation in (\ref{p2-equation}) admits a unique solution $(p_2(\cdot),q_2(\cdot))\in L_{\mathcal{F}}^2(\Omega;C([0,T];\textbf{R}^n))\times L_{\mathcal{F}}^2([0,T];\textbf{R}^n)$, then by \cite{Mohammed84} the first equation in (\ref{p2-equation}) admits a unique solution $\xi(\cdot)\in L_{\mathcal{F}}^2(\Omega;C([0,T];\textbf{R}^n))$.

\vspace{1mm}

Then we can obtain the following result similar to Theorem \ref{thm3.1}.
\begin{mythm}\label{thm4.1}
Assume $\bar{Q}_2(t)=\bar{R}_2(t)=0$ for $t\in [T,T+\delta]$. Let $\bar{u}_2(\cdot)$ be the optimal control of the leader and $\bar{X}(\cdot)$ be the optimal state strategy for the Problem (\textbf{L-DLQ}), then $\bar{u}_2(\cdot)$ satisfies
\begin{equation}\begin{aligned}\label{u2-optimal}
  &\big[R_2(t)+\bar{R}_2(t+\delta)\big]\bar{u}_2(t)+\mathbb{E}^{\mathcal{F}_t}\big[\hat{B}(t+\delta)^\top p_2(t+\delta)+\hat{D}(t+\delta)^\top q_2(t+\delta)\\
  &-\hat{N}_2(t+\delta)^\top\xi(t+\delta)\big]-\hat{N}_1(t)^\top\xi(t)=0,\quad a.e.\ t\in[0,T],\ \mathbb{P}\mbox{-}a.s.
\end{aligned}\end{equation}
\end{mythm}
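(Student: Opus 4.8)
The plan is to repeat the convex-variation argument of Theorem \ref{thm3.1}, now carried out on the leader's coupled forward--backward state system (\ref{leader state}). Fix the optimal $\bar u_2(\cdot)$, pick an arbitrary $v_2(\cdot)\in\mathcal U_2[0,T]$ with $v_2(t)=\eta_2(t)$ on $[-\delta,0]$, and set $u_2^\varepsilon(\cdot)=\bar u_2(\cdot)+\varepsilon\big(v_2(\cdot)-\bar u_2(\cdot)\big)$ for $\varepsilon\in[0,1]$, writing $w(\cdot):=v_2(\cdot)-\bar u_2(\cdot)$ so that $w\equiv 0$ on $[-\delta,0]$. Because (\ref{leader state}) is affine in $(u_2,X,\zeta_1,\bar\zeta_1)$ and the data $\varphi,\eta_1$ are fixed, the variations $(Y,\beta,\bar\beta)$ of $(X^{\bar u_1,u_2},\zeta_1,\bar\zeta_1)$ in the direction $w$ solve the homogeneous version of (\ref{leader state}) driven by $w$, with $Y\equiv 0$ on $[-\delta,0]$ and $\beta=\bar\beta\equiv 0$ on $[T,T+\delta]$; in particular $Y(0)=0$, and $X^{\bar u_1,u_2^\varepsilon}=\bar X+\varepsilon Y$ exactly.

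First I would expand $J_2(u_2^\varepsilon)-J_2(\bar u_2)$ as in (\ref{eq3.3}), obtaining $2\varepsilon\,\mathbb E\big[\int_0^T(\langle Q_2\bar X,Y\rangle+\langle\bar Q_2\bar X(\cdot-\delta),Y(\cdot-\delta)\rangle+\langle R_2\bar u_2,w\rangle+\langle\bar R_2\bar u_2(\cdot-\delta),w(\cdot-\delta)\rangle)\,dt+\langle G_2\bar X(T),Y(T)\rangle\big]+\varepsilon^2\{\cdots\}$. The job is then to rewrite this linear term as a single inner product against $w(t)$. To do so I would apply It\^o's formula to the pairing $\langle p_2(\cdot),Y(\cdot)\rangle+\langle\xi(\cdot),\beta(\cdot)\rangle$ on $[0,T]$ and take expectations, using the adjoint system (\ref{p2-equation}) (note the duality structure: $(p_2,q_2)$ is the backward adjoint of the forward $Y$-equation, while $\xi$ is the forward adjoint of the backward $\beta$-equation). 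The boundary terms collapse: $Y(0)=0$ and $\xi(0)=0$ kill the contributions at $t=0$, while $p_2(T)=G_2\bar X(T)$ reproduces the terminal term $\mathbb E\langle G_2\bar X(T),Y(T)\rangle$ and $\beta(T)=0$ annihilates $\langle\xi(T),\beta(T)\rangle$.

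The heart of the computation, and the step I expect to be the main obstacle, is reconciling the anticipated and delayed terms so that all state-dependent contributions cancel and only the control variation $w$ survives. This requires the standard SDDE/ABSDE duality manipulations: for each delayed coefficient one shifts $t\mapsto t+\delta$ in the corresponding integral, uses that $Y\equiv0$ before $0$ and $p_2,q_2,\beta,\bar\beta$ vanish on $[T,T+\delta]$, and invokes the tower property $\mathbb E\int_0^T\langle\mathbb E^{\mathcal F_t}[\,\cdot\,(t+\delta)],w(t)\rangle dt$, valid since $w(t)$ is $\mathcal F_t$-measurable. The delicate bookkeeping is that the coefficients of the $Y$-equation ($\hat A_2,\hat C_2,\hat F,\hat H,\hat K,\hat M$) and of the $\xi$-equation ($\tilde A_2,\tilde C_2,\hat F,\hat H,\hat K,\hat M$) differ, and the cross terms generated by the couplings $\mathbb E^{\mathcal F_{t-\delta}}[\zeta_1(t)]$, $\mathbb E^{\mathcal F_{t-\delta}}[\bar\zeta_1(t)]$ in (\ref{leader state}) must be matched termwise against the $\mathbb E^{\mathcal F_{t-\delta}}[\hat F^\top p_2+\hat K^\top q_2]$ and $\mathbb E^{\mathcal F_{t-\delta}}[\hat H^\top p_2+\hat M^\top q_2]$ terms of (\ref{p2-equation}); the adjoint system is engineered precisely so that these, together with the running-cost terms $Q_2,\bar Q_2$, cancel identically. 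What remains from $\langle p_2,Y\rangle$ are the control terms $\hat B(t+\delta)^\top p_2(t+\delta)$ and $\hat D(t+\delta)^\top q_2(t+\delta)$, and from $\langle\xi,\beta\rangle$ the terms $\hat N_1(t)^\top\xi(t)$ and $\hat N_2(t+\delta)^\top\xi(t+\delta)$, matching (\ref{u2-optimal}).

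Finally I would combine the surviving terms, divide $J_2(u_2^\varepsilon)-J_2(\bar u_2)\ge0$ by $\varepsilon$, let $\varepsilon\to0$, and invoke the arbitrariness of $v_2(\cdot)$ (equivalently of $w(\cdot)$) to conclude that the $\mathcal F_t$-measurable coefficient of $w(t)$ must vanish a.e.\ $t\in[0,T]$, $\mathbb P$-a.s., which is exactly (\ref{u2-optimal}).
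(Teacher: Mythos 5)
Your overall strategy is exactly the intended one: the paper gives no separate proof of Theorem \ref{thm4.1} (it only remarks that the result is obtained "similar to Theorem \ref{thm3.1}"), and the convex-perturbation plus forward--backward duality computation you outline is the same machinery the paper deploys explicitly in the proof of Theorem \ref{thm4.2}. The convex variation, the fact that $(Y,\beta,\bar\beta)$ solves the homogeneous system driven by $w$ with $Y(0)=0$, the shifts $t\mapsto t+\delta$ using $w\equiv0$ on $[-\delta,0]$ and the vanishing of $p_2,q_2,\beta,\bar\beta$ on $(T,T+\delta]$ together with $\bar Q_2=\bar R_2=0$ beyond $T$, and the final arbitrariness argument are all correct and match the paper's scheme.

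There is, however, one concrete error that would derail the computation as written: the sign of the pairing. You apply It\^o's formula to $\langle p_2,Y\rangle+\langle\xi,\beta\rangle$, whereas the correct functional is $\langle p_2,Y\rangle-\langle\xi,\beta\rangle$ --- note that the paper's proof of Theorem \ref{thm4.2} pairs $\langle p_2(\cdot),\bar X(\cdot)\rangle-\langle\zeta_1(\cdot),\xi(\cdot)\rangle$, with a minus sign. Indeed, computing $d\langle\xi,\beta\rangle$ from (\ref{p2-equation}) and the $\zeta_1$-variation of (\ref{leader state}), the $\hat A_1,\hat C_1$ terms cancel internally and the $\tilde A_2,\tilde C_2$ terms cancel after shifting, so that, using $\xi(0)=0$ and $\beta(T)=0$,
\begin{equation*}
\begin{aligned}
0=\mathbb{E}\int_0^T\Big[&\big\langle p_2(t),\hat F(t)\,\mathbb{E}^{\mathcal{F}_{t-\delta}}[\beta(t)]\big\rangle+\big\langle q_2(t),\hat K(t)\,\mathbb{E}^{\mathcal{F}_{t-\delta}}[\beta(t)]\big\rangle+\big\langle p_2(t),\hat H(t)\,\mathbb{E}^{\mathcal{F}_{t-\delta}}[\bar\beta(t)]\big\rangle\\
&+\big\langle q_2(t),\hat M(t)\,\mathbb{E}^{\mathcal{F}_{t-\delta}}[\bar\beta(t)]\big\rangle+\big\langle\xi(t),\hat N_1(t)w(t)\big\rangle+\big\langle\xi(t),\hat N_2(t)w(t-\delta)\big\rangle\Big]dt,
\end{aligned}
\end{equation*}
and the four cross terms here appear with the \emph{same} sign as the identical cross terms produced inside $\mathbb{E}\langle p_2(T),Y(T)\rangle=\mathbb{E}\int_0^T d\langle p_2,Y\rangle$. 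Consequently, with your ``$+$'' pairing the $\hat F,\hat H,\hat K,\hat M$ cross terms double rather than cancel --- the cancellation you assert in your third paragraph does not occur --- and the control terms would enter the variational inequality as $+\hat N_1(t)^\top\xi(t)+\mathbb{E}^{\mathcal{F}_t}[\hat N_2(t+\delta)^\top\xi(t+\delta)]$, contradicting the minus signs in (\ref{u2-optimal}). Subtracting the displayed identity (equivalently, working with $\langle p_2,Y\rangle-\langle\xi,\beta\rangle$) replaces the cross terms by $-\hat N_1(t)^\top\xi(t)-\mathbb{E}^{\mathcal{F}_t}[\hat N_2(t+\delta)^\top\xi(t+\delta)]$ and yields precisely (\ref{u2-optimal}); with this single sign repaired, your proof goes through.
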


\vspace{1mm}

Next we would like to give the feedback expression of $\bar{u}_2(\cdot)$. In addition \textbf{(A1)}, now we have to impose some other assumptions to the coefficients of (\ref{system equation}) and (\ref{cost}):
\begin{equation*}
  \textbf{(A2)}\left\{\begin{aligned}
  &C(t)^\top P_2(t)\bar{D}_1(t)+P_2(t)\bar{B}_1(t)=0,\quad \ t\in[0,T],\\
  &C(t)^\top P_2(t)\bar{C}(t)+P_2(t)\bar{A}(t)=0,\quad \ t\in[0,T],\\
  &C(t)^\top P_2(t)\bar{D}_2(t)+P_2(t)\bar{B}_2(t)=0,\quad \ t\in[0,T],
  \end{aligned}\right.
\end{equation*}
where $P_2(\cdot)$ is the solution to the following Pseudo-Riccati equation:
\begin{equation}\left\{\begin{aligned}\label{P2-equation}
   \dot{P}_2(t)&=-P_2(t)\hat{A}_1(t)-\hat{A}_1(t)^\top P_2(t)-\hat{C}_1(t)^\top P_2(t)\hat{C}_1(t)\\
               &\quad-\hat{C}_2(t+\delta)^\top P_2(t+\delta)\hat{C}_2(t+\delta)-Q_2(t)-\bar{Q}_2(t+\delta)\\
               &\quad+\hat{C}_2(t+\delta)^\top P_2(t+\delta)\hat{D}(t+\delta)\Omega_2^{-1}(t)\hat{D}(t+\delta)^\top P_2(t+\delta)\hat{C}_2(t+\delta),\ t\in[0,T],\\
         P_2(T)&=G_2, P_2(t)=0,\ t\in(T,T+\delta],\\
    \Omega_2(t)&\triangleq R_2(t)+\bar{R}_2(t+\delta)+\hat{D}(t+\delta)^TP_2(t+\delta)\hat{D}(t+\delta)>0,\quad \forall\ t\in[0,T].
\end{aligned}\right.\end{equation}

Let $(\zeta_2(\cdot),\bar{\zeta}_2(\cdot))$ satisfy the following ABSDE:
\begin{equation}\left\{\begin{aligned}\label{zeta2-equation}
  d\zeta_2(t)&=\Big\{-\hat{A}_1(t)^\top\zeta_2(t)-\hat{C}_1(t)^\top\bar{\zeta}_2(t)+\big[\hat{C}_2(t+\delta)^\top P_2(t+\delta)\hat{D}(t+\delta)\Omega_2^{-1}(t)\hat{B}(t+\delta)^\top\\
             &\qquad-\hat{A}_2(t+\delta)^\top\big]\mathbb{E}^{\mathcal{F}_t}[\zeta_2(t+\delta)]+\big[\hat{C}_2(t+\delta)^\top P_2(t+\delta)\hat{D}(t+\delta)\Omega_2^{-1}(t)\hat{D}(t+\delta)^\top\\
             &\qquad-\hat{C}_2(t+\delta)^\top\big]\mathbb{E}^{\mathcal{F}_t}[\bar{\zeta}_2(t+\delta)]-\hat{C}_2(t+\delta)^\top P_2(t+\delta)\hat{D}(t+\delta)\Omega_2^{-1}(t)\big\{\hat{N}_1(t)^\top\xi(t)\\
             &\qquad+\mathbb{E}^{\mathcal{F}_t}[\hat{N}_2(t+\delta)^\top\xi(t+\delta)]\big\}+\big[-\hat{C}_2(t+\delta)^\top P_2(t+\delta)\hat{K}(t+\delta)\\
             &\qquad+\hat{C}_2(t+\delta)^\top P_2(t+\delta)\hat{D}(t+\delta)\Omega_2^{-1}(t)\hat{D}(t+\delta)^\top P_2(t+\delta)\hat{K}(t+\delta)\big]\mathbb{E}^{\mathcal{F}_t}[\zeta_1(t+\delta)]\\
             &\qquad+\big[\hat{C}_2(t+\delta)^\top P_2(t+\delta)\hat{D}(t+\delta)\Omega_2^{-1}(t)\hat{D}(t+\delta)^\top P_2(t+\delta)\hat{M}(t+\delta)\\
             &\qquad-\hat{C}_2(t+\delta)^\top P_2(t+\delta)\hat{M}(t+\delta)\big]\mathbb{E}^{\mathcal{F}_t}[\bar{\zeta}_1(t+\delta)]\Big\}dt+\bar{\zeta}_2(t)dW(t),\ t\in[0,T],\\
   \zeta_2(t)&=\bar{\zeta}_2(t)=0,\ t\in[T,T+\delta].
\end{aligned}\right.\end{equation}

\begin{Remark}\label{rem4.1}
Similar to (\ref{P1-equation}), (\ref{P2-equation}) is not a Riccati-type equation and can be solved step by step as a linear ODE, like (19) in \cite{XSZ18}. On the other hand, ABSDE (\ref{zeta2-equation}) has the unique solution $(\zeta_2(\cdot),\bar{\zeta}_2(\cdot))\in L_{\mathcal{F}}^2(\Omega;C([0,T];\textbf{R}^n))\times L_{\mathcal{F}}^2([0,T];\textbf{R}^n)$ (see \cite{PY09}).
\end{Remark}

\vspace{1mm}

It is easy to verify that
\begin{equation}\left\{\begin{aligned}\label{eq4.6}
  p_2(t)&=P_2(t)\bar{X}(t)+\zeta_2(t),\ t\in[0,T],\ P-a.s.\\
  q_2(t)&=P_2(t)\hat{C}_1(t)\bar{X}(t)+P_2(t)\hat{C}_2(t)\bar{X}(t-\delta)+P_2(t)\hat{D}(t)\bar{u}_2(t-\delta)\\
        &\quad+P_2(t)\hat{K}(t)\mathbb{E}^{\mathcal{F}_{t-\delta}}[\zeta_1(t)]+P_2(t)\hat{M}(t)\mathbb{E}^{\mathcal{F}_{t-\delta}}[\bar{\zeta}_1(t)]\\
        &\quad+P_2(t)\bar{D}_1(t)\eta_1(t-\delta)I_{[0,\delta]}(t)+\bar{\zeta}_2(t),\ t\in[0,T],\ P-a.s..
\end{aligned}\right.\end{equation}
Furthermore, with some computations, (\ref{u2-optimal}) becomes
\begin{equation}\begin{aligned}\label{u2-optimal-2}
  \bar{u}_2(t)=&-\Omega_2^{-1}(t)\Big\{\hat{D}(t+\delta)^\top P_2(t+\delta)\hat{C}_2(t+\delta)\bar{X}(t)+\hat{B}(t+\delta)^\top\mathbb{E}^{\mathcal{F}_t}[\zeta_2(t+\delta)]\\
               &\ +\hat{D}(t+\delta)^\top\mathbb{E}^{\mathcal{F}_t}[\bar{\zeta}_2(t+\delta)]+\hat{D}(t+\delta)^\top P_2(t+\delta)\hat{K}(t+\delta)\mathbb{E}^{\mathcal{F}_t}[\zeta_1(t+\delta)]\\
               &\ +\hat{D}(t+\delta)^\top P_2(t+\delta)\hat{M}(t+\delta)\mathbb{E}^{\mathcal{F}_t}[\bar{\zeta}_1(t+\delta)]-\hat{N}_1(t)^\top\xi(t)\\
               &\ -\mathbb{E}^{\mathcal{F}_t}\big[\hat{N}_2(t+\delta)^\top\xi(t+\delta)\big]\Big\},\quad a.e.\ t\in[0,T],\ \mathbb{P}\mbox{-}a.s.
\end{aligned}\end{equation}

Noting the representation of optimal control $\bar{u}_2(\cdot)$ in (\ref{u2-optimal-2}) is not satisfactory, since in order to determine $\xi(\cdot)$, we need to solve the adjoint equation (\ref{p2-equation}). Hence we have to know the information $\bar{X}(T)$, this is not the desired result. We expect to obtain a kind of feedback expression for the optimal control $u_2(\cdot)$ similar to (\ref{u1-feedback}). For this target, we stack the forward variables and the backward variables obtained in the optimization of the follower and the leader, respectively.

\vspace{1mm}

We denote
\begin{eqnarray*}\begin{aligned}
  &\phi(t)\triangleq  \begin{bmatrix}
  \xi(t)\\
  \bar{X}(t)
  \end{bmatrix},\quad \psi(t)\triangleq \begin{bmatrix}\zeta_1(t)\\
  \zeta_2(t)
  \end{bmatrix},\quad \bar{\psi}(t)\triangleq \begin{bmatrix}\bar{\zeta}_1(t)\\
  \bar{\zeta}_2(t)
  \end{bmatrix},\quad \mathcal{A}_1(t)\triangleq \begin{bmatrix}
  \hat{A}_1(t) & 0\\
  0 & \hat{A}_1(t)
  \end{bmatrix},\\
  &\mathcal{A}_2(t)\triangleq \begin{bmatrix}
  0 & \hat{F}(t)^\top P_2(t)+\hat{K}(t)^\top P_2(t)\hat{C}_1(t)\\
  0 & 0
  \end{bmatrix},\quad \mathcal{A}_3(t)\triangleq \begin{bmatrix}
  \tilde{A}_2(t) & \hat{K}(t)^\top P_2(t)\hat{C}_2(t)\\
  0 & \hat{A}_2(t)
  \end{bmatrix},\\
  &\mathcal{B}(t)\triangleq \begin{bmatrix}
  \hat{K}(t)^\top P_2(t)\hat{K}(t) & \hat{F}(t)^\top\\
  \hat{F}(t) & 0
  \end{bmatrix},\quad \mathcal{C}(t)\triangleq \begin{bmatrix}
  \hat{K}(t)^\top P_2(t)\hat{M}(t) & \hat{K}(t)^\top\\
  \hat{H}(t) & 0
  \end{bmatrix},\\
  &\bar{\mathcal{A}}_1(t)\triangleq \begin{bmatrix}
  \hat{C}_1(t) & 0\\
  0 & \hat{C}_1(t)
  \end{bmatrix},\quad \bar{\mathcal{A}}_2(t)\triangleq \begin{bmatrix}
  0 & \hat{H}(t)^\top P_2(t)+\hat{M}(t)^\top P_2(t)\hat{C}_1(t)\\
  0 & 0
  \end{bmatrix},\\
  &\bar{\mathcal{A}}_3(t)\triangleq \begin{bmatrix}
  \tilde{C}_2(t) & \hat{M}(t)^\top P_2(t)\hat{C}_2(t)\\
  0 & \hat{C}_2(t)
  \end{bmatrix},\quad \bar{\mathcal{C}}(t)\triangleq \begin{bmatrix}
  \hat{M}(t)^\top P_2(t)\hat{M}(t) & \hat{M}(t)^\top\\
  \hat{M}(t) & 0
  \end{bmatrix},\\
  &\mathcal{D}(t)\triangleq \begin{bmatrix}
  \hat{K}(t)^\top P_2(t)\hat{D}(t)\\
  \hat{B}(t)
  \end{bmatrix},\quad \bar{\mathcal{D}}(t)\triangleq \begin{bmatrix}
  \hat{M}(t)^\top P_2(t)\hat{D}(t)\\
  \hat{D}(t)
  \end{bmatrix},\\
  &\mathcal{G}_1(t)\triangleq \begin{bmatrix}
  \hat{N}_1(t)\\
  -\hat{C}_2(t+\delta)^\top P_2(t+\delta)\hat{D}(t+\delta)
  \end{bmatrix},\quad
  \mathcal{G}_2(t)\triangleq \begin{bmatrix}
  \hat{N}_2(t)\\
     0
  \end{bmatrix},\\
  &\mathcal{E}(t)\triangleq \begin{bmatrix}
  0 & 0\\
  0 & -\hat{C}_2(t+\delta)^\top P_2(t+\delta)\hat{D}(t+\delta)\Omega_2^{-1}(t)\hat{D}(t+\delta)^\top P_2(t+\delta)\hat{C}_2(t+\delta)
  \end{bmatrix},\\
  &\mathcal{M}(t)\triangleq \begin{bmatrix}
  \hat{K}(t)^\top P_2(t)\bar{D}_1(t)\eta_1(t-\delta)I_{[0,\delta]}(t)\\
  \bar{B}_1(t)\eta_1(t-\delta)I_{[0,\delta]}(t)
  \end{bmatrix},\quad \bar{\mathcal{M}}(t)\triangleq \begin{bmatrix}
  \hat{M}(t)^\top P_2(t)\bar{D}_1(t)\eta_1(t-\delta)I_{[0,\delta]}(t)\\
  \bar{D}_1(t)\eta_1(t-\delta)I_{[0,\delta]}(t)
  \end{bmatrix},\\
\end{aligned}\end{eqnarray*}
then we have
\begin{equation}\left\{\begin{aligned}\label{total-state-equation}
  d\phi(t)&=\Big\{\mathcal{A}_1(t)\phi(t)+\mathcal{A}_2(t)\mathbb{E}^{\mathcal{F}_{t-\delta}}[\phi(t)]+\mathcal{A}_3(t)\phi(t-\delta)+\mathcal{B}(t)\mathbb{E}^{\mathcal{F}_{t-\delta}}[\psi(t)]\\
          &\qquad+\mathcal{C}(t)\mathbb{E}^{\mathcal{F}_{t-\delta}}[\bar{\psi}(t)]+\mathcal{D}(t)\bar{u}_2(t-\delta)+\mathcal{M}(t)\Big\}dt\\
          &\quad+\Big\{\bar{\mathcal{A}}_1(t)\phi(t)+\bar{\mathcal{A}}_2(t)\mathbb{E}^{\mathcal{F}_{t-\delta}}[\phi(t)]+\bar{\mathcal{A}}_3(t)\phi(t-\delta)+\mathcal{C}(t)^\top\mathbb{E}^{\mathcal{F}_{t-\delta}}[\psi(t)]\\
          &\qquad+\bar{\mathcal{C}}(t)\mathbb{E}^{\mathcal{F}_{t-\delta}}[\bar{\psi}(t)]+\bar{\mathcal{D}}(t)\bar{u}_2(t-\delta)+\bar{\mathcal{M}}(t)\Big\}dW(t),\\
  d\psi(t)&=\Big\{\mathcal{E}(t)\phi(t)-\mathcal{A}_1(t)^\top\psi(t)-\mathcal{A}_2(t)^\top\mathbb{E}^{\mathcal{F}_{t-\delta}}[\psi(t)]-\mathcal{A}_3(t+\delta)^\top\mathbb{E}^{\mathcal{F}_t}[\psi(t+\delta)]\\
          &\qquad-\bar{\mathcal{A}_1}(t)^\top\bar{\psi}(t)-\bar{\mathcal{A}_2}(t)^\top\mathbb{E}^{\mathcal{F}_{t-\delta}}[\bar{\psi}(t)]-\bar{\mathcal{A}_3}(t+\delta)^\top\mathbb{E}^{\mathcal{F}_t}[\bar{\psi}(t+\delta)]\\
          &\qquad+\mathcal{G}_1(t)\bar{u}_2(t)+\mathcal{G}_2(t)\bar{u}_2(t-\delta)\Big\}dt+\bar{\psi}(t)dW(t),\ t\in[0,T],\\
  \phi(t)&=(0,\varphi(t)^\top)^\top,\ t\in[-\delta,0],\\
  \psi(t)&=\bar{\psi}(t)=(0,0)^\top,\ t\in[T,T+\delta],
\end{aligned}\right.\end{equation}
and (\ref{u2-optimal-2}) can be simplified as
\begin{equation}\begin{aligned}\label{u2-optimal-3}
  \bar{u}_2(t)&=-\Omega_2^{-1}(t)\Big\{-\mathcal{G}_1(t)^T\phi(t)+\mathcal{D}(t+\delta)^\top\mathbb{E}^{\mathcal{F}_t}[\psi(t+\delta)]+\bar{\mathcal{D}}(t+\delta)^\top\mathbb{E}^{\mathcal{F}_t}[\bar{\psi}(t+\delta)]\\
              &\qquad\qquad\quad\ -\mathcal{G}_2(t+\delta)^\top\mathbb{E}^{\mathcal{F}_t}[\phi(t+\delta)]\Big\},\quad a.e.\ t\in[0,T],\ \mathbb{P}\mbox{-}a.s.
\end{aligned}\end{equation}

\vspace{1mm}

To derive the feedback expression  of the optimal control $\bar{u}_2(\cdot)$, we will establish a nonhomogeneous relationship between $\phi(\cdot)$ and $\psi(\cdot)$ (see \cite{XSZ18}). Next basing on \textbf{(A1)}, \textbf{(A2)}, we also impose the following assumptions to the coefficients of (\ref{system equation}) and (\ref{cost}):
\begin{equation*}
  \textbf{(A3)}\left\{\begin{aligned}
  &I-\bar{D}_1(t)\Omega_1^{-1}(t-\delta)\bar{D}_1(t)^\top P_1(t)=0,\quad \ t\in[0,T],\\
  &\bar{A}(t)-\bar{B}_1(t)\Omega_1^{-1}(t-\delta)\bar{D}_1(t)^\top P_1(t)\bar{C}(t)=0,\quad \ t\in[0,T].
  \end{aligned}\right.
\end{equation*}

Hence
\begin{equation*}\left\{\begin{aligned}
  &\hat{A}_2(t)=\bar{A}(t),\ \hat{C}_2(t)=\bar{C}(t),\ \hat{D}(t)=\bar{D}_2(t),\ \tilde{A}_2(t)=\tilde{C}_2(t)=\hat{N}_1(t)=0,\quad t\in[0,\delta),\\
  &\tilde{A}_2(t)=\tilde{C}_2(t)=\hat{A}_2(t)\equiv\hat{C}_2(t)\equiv\hat{D}(t)\equiv\hat{N}_1(t)\equiv0,\quad t\in[\delta,T],\\
  &\mathcal{A}_3(t)\equiv\bar{\mathcal{A}_3}(t)\equiv\mathcal{E}(t)\equiv0,\quad \bar{\mathcal{D}}(t)\equiv\mathcal{G}_1(t)\equiv0,\quad t\in[\delta,T],\\
  &\mathcal{A}_3(t)=\begin{bmatrix}
  0 & \hat{K}(t)^TP_2(t)\bar{C}(t)\\
  0 & \bar{A}(t)
  \end{bmatrix},\quad \bar{\mathcal{A}_3}(t)=\begin{bmatrix}
  0 & \hat{M}(t)^TP_2(t)\bar{C}(t)\\
  0 & \bar{C}(t)
  \end{bmatrix},\quad t\in[0,\delta),\\
  &\mathcal{E}(t)\equiv0,\quad \bar{\mathcal{D}}(t)=\begin{bmatrix}
  \hat{M}(t)^TP_2(t)\bar{D}_2(t)\\
  \bar{D}_2(t)
  \end{bmatrix},\quad \mathcal{G}_1(t)\equiv0,\quad t\in[0,\delta).\\
\end{aligned}\right.\end{equation*}

Denote further
\begin{equation*}\left\{\begin{aligned}
  \Omega_3(t)&\triangleq R_2(t)+\bar{R}_2(t+\delta),\\
     \Xi_1(t)&\triangleq \mathcal{A}_2(t)+\mathcal{B}(t)L(t)+\mathcal{C}(t)[I-L(t)\bar{\mathcal{C}}(t)]^{-1}L(t)\big[\bar{\mathcal{A}_1}(t)+\bar{\mathcal{A}_2}(t)+\mathcal{C}(t)^\top L(t)]\\
             &\quad+\mathcal{D}(t)\Omega_3^{-1}(t-\delta)\big[\mathcal{G}_2(t)^\top-\mathcal{D}(t)^\top L(t)\big],\\
     \Xi_2(t)&\triangleq-\mathcal{B}(t)-\mathcal{C}(t)[I-L(t)\bar{\mathcal{C}}(t)]^{-1}L(t)\mathcal{C}(t)^\top+\mathcal{D}(t)\Omega_3^{-1}(t-\delta)\mathcal{D}(t)^\top,\\
     \Xi_3(t)&\triangleq[\mathcal{G}_2(t)-L(t)\mathcal{D}(t)]\Omega_3^{-1}(t-\delta)[\mathcal{G}_2(t)^\top-\mathcal{D}(t)^\top L(t)]-\big[\bar{\mathcal{A}_1}(t)+\bar{\mathcal{A}_2}(t)\\
             &\quad+\mathcal{C}(t)^\top L(t)\big]^\top[I-L(t)\bar{\mathcal{C}}(t)]^{-1}L(t)\big[\bar{\mathcal{A}_1}(t)+\bar{\mathcal{A}_2}(t)+\mathcal{C}(t)^\top L(t)\big]\\
             &\quad-\mathcal{A}_2(t)^\top L(t)-L(t)\mathcal{A}_2(t)-L(t)\mathcal{B}(t)L(t)+\bar{\mathcal{A}_1}(t)^\top L(t)\bar{\mathcal{A}_1}(t),
\end{aligned}\right.\end{equation*}
where $L(\cdot)$ is the solution to the following matrix equation:
\begin{equation}\left\{\begin{aligned}\label{L-equation}
  \dot{L}(t)&=-L(t)\mathcal{A}_1(t)-\mathcal{A}_1(t)^\top L(t)+\Pi(t,t+\delta)I_{[0,T-\delta]}(t)-\bar{\mathcal{A}_1}(t)^\top L(t)\bar{\mathcal{A}}_1(t),\ t\in[0,T],\\
        L(T)&=0,\\
         [I-L&(t)\bar{\mathcal{C}}(t)]^{-1}\ \mbox{exists},\ t\in[0,T],
\end{aligned}\right.\end{equation}
and $\Pi(\cdot,\cdot)$ satisfies
\begin{equation}\left\{\begin{aligned}\label{Pi-equation}
  -\frac{\partial\Pi(t,\theta)}{\partial t}&=\Pi(t,\theta)\mathcal{A}_1(t)+\mathcal{A}_1(t)^\top\Pi(t,\theta),\ \theta-\delta\leq t\leq \theta,\\
                                   \Pi(t,t)&=\int_t^{(t+\delta)\wedge T}\Pi(t,\theta)d\theta\Xi_1(t)+\Xi_1(t)^\top\int_t^{(t+\delta)\wedge T}\Pi(t,\theta)d\theta\\
  &+\int_t^{(t+\delta)\wedge T}\Pi(t,\theta)d\theta\Xi_2(t)\int_t^{(t+\delta)\wedge T}\Pi(t,\theta)d\theta+\Xi_3(t).
\end{aligned}\right.\end{equation}
Apparently $\Pi(\cdot,\cdot)$ is symmetric, hence so is $L(\cdot)$.

\begin{Remark}\label{rem4.3}
 Comparing with (25) in \cite{XSZ18}, the matrix equation (\ref{L-equation}) is very simple.
\end{Remark}

Then we have the nonhomogeneous relationship between $\psi(\cdot)$ and $\phi(\cdot)$ which is shown below.
\begin{mylem}\label{lem4.1}
Provided that the matrix equation $(\ref{L-equation})$ and $(\ref{Pi-equation})$ have the unique solutions, then it holds that
\begin{equation}\label{relationship}
  \psi(t)=L(t)\phi(t)-\int_t^{(t+\delta)\wedge T}\Pi(t,\theta)\hat{\phi}(t|\theta-\delta)d\theta,\quad t\in[\delta,T],
\end{equation}
where $\hat{\phi}(t|\theta-\delta)\triangleq\mathbb{E}^{\mathcal{F}_{\theta-\delta}}[\phi(t)]$.
\end{mylem}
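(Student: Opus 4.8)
The plan is to verify the ansatz (\ref{relationship}) by a decoupling-and-uniqueness argument. I would introduce the candidate process $\tilde{\psi}(t):=L(t)\phi(t)-\int_t^{(t+\delta)\wedge T}\Pi(t,\theta)\hat{\phi}(t|\theta-\delta)d\theta$, compute $d_t\tilde{\psi}(t)$ via It\^o's formula together with the Leibniz rule, and show that $(\tilde{\psi},L\sigma_\phi)$ (with $\sigma_\phi$ the diffusion of $\phi$) solves exactly the backward ABSDE for $\psi$ in (\ref{total-state-equation}) with the same terminal datum. Since the ABSDE for $\psi$ driven by the fixed forward process $\phi$ has a unique solution by \cite{PY09}, forming the difference $\Delta:=\psi-\tilde{\psi}$ and checking it satisfies a homogeneous linear ABSDE with zero terminal condition then forces $\Delta\equiv0$ on $[\delta,T]$.

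First I would restrict to $t\in[\delta,T]$ and record the simplifications forced by \textbf{(A3)}: there $\mathcal{A}_3,\bar{\mathcal{A}}_3,\mathcal{E},\bar{\mathcal{D}},\mathcal{G}_1,\mathcal{M},\bar{\mathcal{M}}$ all vanish, and the anticipated terms $\mathcal{A}_3(t+\delta)^\top\mathbb{E}^{\mathcal{F}_t}[\psi(t+\delta)]$ and $\bar{\mathcal{A}}_3(t+\delta)^\top\mathbb{E}^{\mathcal{F}_t}[\bar{\psi}(t+\delta)]$ drop out as well (either $t+\delta\le T$ and the coefficient is zero, or $t+\delta>T$ and $\psi(t+\delta)=\bar{\psi}(t+\delta)=0$). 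This reduces both the forward $\phi$-dynamics and the backward $\psi$-dynamics to their interior forms involving only $\phi(t)$, $\mathbb{E}^{\mathcal{F}_{t-\delta}}[\phi(t)]$, $\mathbb{E}^{\mathcal{F}_{t-\delta}}[\psi(t)]$, $\mathbb{E}^{\mathcal{F}_{t-\delta}}[\bar{\psi}(t)]$, and $\bar{u}_2(t-\delta)$; the restriction $t\ge\delta$ is exactly what makes the conditioning $\mathcal{F}_{t-\delta}$ and $\mathcal{F}_{\theta-\delta}$ well defined on the integration range.

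The core computation is $d_t\tilde{\psi}(t)$. For $L(t)\phi(t)$ I would apply It\^o's formula using $\dot{L}$ from (\ref{L-equation}) and the $\phi$-SDDE. For the integral term I would use that, for fixed $\theta$, $\hat{\phi}(t|\theta-\delta)=\mathbb{E}^{\mathcal{F}_{\theta-\delta}}[\phi(t)]$ carries \emph{no} martingale part in $t$ and satisfies $\partial_t\hat{\phi}(t|\theta-\delta)=\mathbb{E}^{\mathcal{F}_{\theta-\delta}}[b_\phi(t)]$, where $b_\phi$ is the drift of $\phi$; combined with the Leibniz rule for the moving limits $t$ and $(t+\delta)\wedge T$ and the transport equation $-\partial_t\Pi=\Pi\mathcal{A}_1+\mathcal{A}_1^\top\Pi$ of (\ref{Pi-equation}). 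The lower-limit boundary term produces $-\Pi(t,t)\mathbb{E}^{\mathcal{F}_{t-\delta}}[\phi(t)]$, while for $t<T-\delta$ the upper-limit term produces $\Pi(t,t+\delta)\hat{\phi}(t|t)=\Pi(t,t+\delta)\phi(t)$, which is precisely the source term $\Pi(t,t+\delta)I_{[0,T-\delta]}(t)$ built into the $L$-ODE (\ref{L-equation}) in order to absorb it. Because the integral contributes no $dW(t)$, reading off the diffusion coefficient gives $\bar{\psi}(t)=L(t)\sigma_\phi(t)$.

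Matching coefficients is the crux. Taking $\mathbb{E}^{\mathcal{F}_{t-\delta}}[\cdot]$ of $\bar{\psi}(t)=L(t)\sigma_\phi(t)$ yields an implicit equation, solved by inverting $I-L(t)\bar{\mathcal{C}}(t)$ — this is exactly why invertibility is imposed in (\ref{L-equation}) — giving $\mathbb{E}^{\mathcal{F}_{t-\delta}}[\bar{\psi}(t)]=[I-L\bar{\mathcal{C}}]^{-1}L[(\bar{\mathcal{A}}_1+\bar{\mathcal{A}}_2)\mathbb{E}^{\mathcal{F}_{t-\delta}}[\phi(t)]+\mathcal{C}^\top\mathbb{E}^{\mathcal{F}_{t-\delta}}[\psi(t)]]$. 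In parallel, conditioning the ansatz on $\mathcal{F}_{t-\delta}$ and using the tower property (valid since $\theta-\delta\ge t-\delta$ on the range) gives $\mathbb{E}^{\mathcal{F}_{t-\delta}}[\psi(t)]=[L(t)-\int_t^{(t+\delta)\wedge T}\Pi(t,\theta)d\theta]\,\mathbb{E}^{\mathcal{F}_{t-\delta}}[\phi(t)]$. Substituting these back and grouping, the $\phi(t)$-terms reconstruct the $L$-ODE (\ref{L-equation}), while the $\mathbb{E}^{\mathcal{F}_{t-\delta}}[\phi(t)]$-terms collapse onto the diagonal condition $\Pi(t,t)=\cdots$ of (\ref{Pi-equation}), with $\Xi_1,\Xi_2,\Xi_3$ being precisely the coefficient blocks produced by the $[I-L\bar{\mathcal{C}}]^{-1}$ substitution. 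The terminal condition is immediate, since $L(T)=0$ and the integral is empty at $t=T$, so $\tilde{\psi}(T)=0=\psi(T)$. I expect the main obstacle to be the bookkeeping in differentiating the integral term — keeping the moving upper limit, the $\partial_t\hat{\phi}$ rule, and the nested conditional expectations consistent — and then confirming that the matched drift is algebraically identical to the right-hand side of the diagonal condition in (\ref{Pi-equation}); this identification, rather than any analytic difficulty, is where the real work lies.
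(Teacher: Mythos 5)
Your proposal is correct and is essentially the paper's own argument: the paper sets $\Theta(t)=\psi(t)-L(t)\phi(t)$ and verifies that $\bigl(-\int_t^{(t+\delta)\wedge T}\Pi(t,\theta)\hat{\phi}(t|\theta-\delta)d\theta,\,0\bigr)$ solves the resulting BSDE piecewise on $[T-\delta,T]$ and $[\delta,T-\delta]$, using exactly your ingredients --- It\^o plus the Leibniz rule with moving limits, the transport and diagonal conditions of (\ref{Pi-equation}), the source term $\Pi(t,t+\delta)I_{[0,T-\delta]}(t)$ in (\ref{L-equation}), the inversion of $I-L(t)\bar{\mathcal{C}}(t)$, the tower-property identity for $\mathbb{E}^{\mathcal{F}_{t-\delta}}[\psi(t)]$, and uniqueness of the backward equation --- so verifying $\tilde{\psi}$ directly against the $\psi$-equation rather than working with $\Theta$ is only a cosmetic reorganization. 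One small correction to your bookkeeping: to close the system you must also substitute the feedback representation of $\bar{u}_2(t-\delta)$ obtained from (\ref{u2-optimal-3}) under the ansatz (the paper's (\ref{eq4.17})), since the $\Omega_3^{-1}$-blocks in $\Xi_1$, $\Xi_2$, $\Xi_3$ arise from that substitution and not, as you state, solely from the $[I-L(t)\bar{\mathcal{C}}(t)]^{-1}$ inversion.
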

\begin{proof}
Let
\begin{equation}\label{eq4.13}
  \Theta(t)\triangleq \psi(t)-L(t)\phi(t),\quad t\in[\delta,T],
\end{equation}
and suppose $d\Theta(t)=\Theta_1(t)dt+\Theta_2(t)dW(t)$. Applying It\^o's formula to (\ref{eq4.13}), we have
\begin{equation}\begin{aligned}\label{eq4.14}
  d\Theta(t)&=\Big\{-\mathcal{A}_1(t)^\top\psi(t)-\mathcal{A}_2(t)^\top\mathbb{E}^{\mathcal{F}_{t-\delta}}[\psi(t)]-\bar{\mathcal{A}_1}(t)^\top\bar{\psi}(t)
             -\bar{\mathcal{A}_2}(t)^\top\mathbb{E}^{\mathcal{F}_{t-\delta}}[\bar{\psi}(t)]\\
            &\qquad+\mathcal{G}_2(t)\bar{u}_2(t-\delta)+\mathcal{A}_1(t)^\top L(t)\phi(t)-\Pi(t,t+\delta)I_{[0,T-\delta]}(t)\phi(t)\\
            &\qquad+\bar{\mathcal{A}_1}(t)^\top L(t)\bar{\mathcal{A}}_1(t)\phi(t)-L(t)\mathcal{A}_2(t)\mathbb{E}^{\mathcal{F}_{t-\delta}}[\phi(t)]-L(t)\mathcal{B}(t)\mathbb{E}^{\mathcal{F}_{t-\delta}}[\psi(t)]\\
            &\qquad-L(t)\mathcal{C}(t)\mathbb{E}^{\mathcal{F}_{t-\delta}}[\bar{\psi}(t)]-L(t)\mathcal{D}(t)u_2(t-\delta)\Big\}dt\\
            &\quad+\Big\{\bar{\psi}(t)-L(t)\bar{\mathcal{A}}_1(t)\phi(t)-L(t)\bar{\mathcal{A}}_2(t)\mathbb{E}^{\mathcal{F}_{t-\delta}}[\phi(t)]\\
            &\qquad-L(t)\mathcal{C}(t)^\top\mathbb{E}^{\mathcal{F}_{t-\delta}}[\psi(t)]-L(t)\bar{\mathcal{C}}(t)\mathbb{E}^{\mathcal{F}_{t-\delta}}[\bar{\psi}(t)]\Big\}dW(t),\quad t\in[\delta,T].
\end{aligned}\end{equation}
Then we obtain
\begin{equation}\begin{aligned}\label{eq4.15}
  \bar{\psi}(t)&=\Theta_2(t)+L(t)\bar{\mathcal{A}}_1(t)\phi(t)+L(t)\bar{\mathcal{A}}_2(t)\mathbb{E}^{\mathcal{F}_{t-\delta}}[\phi(t)]+L(t)\mathcal{C}(t)^T\mathbb{E}^{\mathcal{F}_{t-\delta}}[\psi(t)]\\
               &\quad+L(t)\bar{\mathcal{C}}(t)\mathbb{E}^{\mathcal{F}_{t-\delta}}[\bar{\psi}(t)],\quad t\in[\delta,T].
\end{aligned}\end{equation}
It follows that
\begin{equation}\begin{aligned}\label{eq4.16}
  \mathbb{E}^{\mathcal{F}_{t-\delta}}[\bar{\psi}(t)]&=[I-L(t)\bar{\mathcal{C}}(t)]^{-1}\Big\{\mathbb{E}^{\mathcal{F}_{t-\delta}}[\Theta_2(t)]+L(t)\bar{\mathcal{A}}_1(t)\mathbb{E}^{\mathcal{F}_{t-\delta}}[\phi(t)]\\
                                                    &\quad+L(t)\bar{\mathcal{A}}_2(t)\mathbb{E}^{\mathcal{F}_{t-\delta}}[\phi(t)]+L(t)\mathcal{C}(t)^T\mathbb{E}^{\mathcal{F}_{t-\delta}}[\psi(t)]\Big\},\quad t\in[\delta,T].
\end{aligned}\end{equation}
Substituting (\ref{eq4.13}) into (\ref{u2-optimal-3}), we deduce
\begin{equation}\begin{aligned}\label{eq4.17}
  \bar{u}_2(t)&=-\Omega_3^{-1}(t)\Big\{\big[\mathcal{D}(t+\delta)^\top L(t+\delta)-\mathcal{G}_2(t+\delta)^\top\big]\mathbb{E}^{\mathcal{F}_t}[\phi(t+\delta)]\\
              &\quad\quad\quad\quad\quad\quad+\mathcal{D}(t+\delta)^T\mathbb{E}^{\mathcal{F}_t}[\Theta(t+\delta)]\Big\},\quad a.e.\ t\in[0,T],\ \mathbb{P}\mbox{-}a.s.
\end{aligned}\end{equation}
Substituting (\ref{eq4.16}) into (\ref{eq4.15}), we derive
\begin{equation}\begin{aligned}\label{eq4.18}
  \bar{\psi}(t)&=L(t)\bar{\mathcal{A}}_1(t)\phi(t)+\Big\{L(t)\bar{\mathcal{C}}(t)[I-L(t)\bar{\mathcal{C}}(t)]^{-1}L(t)\big[\bar{\mathcal{A}_1}(t)+\bar{\mathcal{A}_2}(t)+\mathcal{C}(t)^\top L(t)\big]\\
               &\quad+L(t)\bar{\mathcal{A}}_2(t)+L(t)\mathcal{C}(t)^\top L(t)\Big\}\mathbb{E}^{\mathcal{F}_{t-\delta}}[\phi(t)]+\Big\{L(t)\bar{\mathcal{C}}(t)[I-L(t)\bar{\mathcal{C}}(t)]^{-1}L(t)\mathcal{C}(t)^\top\\
               &\quad+L(t)\mathcal{C}(t)^\top\Big\}\mathbb{E}^{\mathcal{F}_{t-\delta}}[\Theta(t)]+\Theta_2(t)+L(t)\bar{\mathcal{C}}(t)[I-L(t)\bar{\mathcal{C}}(t)]^{-1}\mathbb{E}^{\mathcal{F}_{t-\delta}}[\Theta_2(t)],\quad t\in[\delta,T].
\end{aligned}\end{equation}
Hence we have
\begin{equation}\begin{aligned}\label{eq4.19}
  \Theta_1(t)&=\Big\{-\big[\bar{\mathcal{A}_2}(t)^\top+L(t)\mathcal{C}(t)\big][I-L(t)\bar{\mathcal{C}}(t)]^{-1}L(t)\big[\bar{\mathcal{A}_1}(t)+\bar{\mathcal{A}_2}(t)+\mathcal{C}(t)^\top L(t)\big]\\
             &\qquad-\mathcal{A}_2(t)^\top L(t)-L(t)\mathcal{A}_2(t)-L(t)\mathcal{B}(t)L(t)\Big\}\mathbb{E}^{\mathcal{F}_{t-\delta}}[\phi(t)]-\bar{\mathcal{A}_1}(t)^\top\bar{\psi}(t)\\
             &\quad-\Pi(t,t+\delta)I_{[0,T-\delta]}(t)\phi(t)+\bar{\mathcal{A}_1}(t)^\top L(t)\bar{\mathcal{A}}_1(t)\phi(t)-\Big\{[\bar{\mathcal{A}}_2(t)^\top\\
             &\qquad+L(t)\mathcal{C}(t)][I-L(t)\bar{\mathcal{C}}(t)]^{-1}L(t)\mathcal{C}(t)^\top+\mathcal{A}_2(t)^\top+L(t)\mathcal{B}(t)\Big\}\mathbb{E}^{\mathcal{F}_{t-\delta}}[\Theta(t)]\\
             &\quad-\mathcal{A}_1(t)^\top\Theta(t)+\big[\mathcal{G}_2(t)-L(t)\mathcal{D}(t)\big]\bar{u}_2(t-\delta)\\
             &\quad-\big[\bar{\mathcal{A}_2}(t)^\top+L(t)\mathcal{C}(t)\big][I-L(t)\bar{\mathcal{C}}(t)]^{-1}\mathbb{E}^{\mathcal{F}_{t-\delta}}[\Theta_2(t)],\quad t\in[\delta,T].
\end{aligned}\end{equation}
Plugging (\ref{eq4.17}) and (\ref{eq4.18}) into (\ref{eq4.19}), it follows that
\begin{equation}\begin{aligned}\label{eq4.20}
  \Theta_1(t)&=\Big\{-\big[\bar{\mathcal{A}_2}(t)^\top+L(t)\mathcal{C}(t)\big][I-L(t)\bar{\mathcal{C}}(t)]^{-1}L(t)\big[\bar{\mathcal{A}_1}(t)+\bar{\mathcal{A}_2}(t)+\mathcal{C}(t)^\top L(t)\big]\\
             &\qquad-\bar{\mathcal{A}_1}(t)^\top L(t)\bar{\mathcal{C}}(t)[I-L(t)\bar{\mathcal{C}}(t)]^{-1}L(t)\big[\bar{\mathcal{A}_1}(t)+\bar{\mathcal{A}_2}(t)+\mathcal{C}(t)^\top L(t)\big]\\
             &\qquad-\bar{\mathcal{A}_1}(t)^\top L(t)\bar{\mathcal{A}}_2(t)-\bar{\mathcal{A}_1}(t)^\top L(t)\mathcal{C}(t)^\top L(t)-\mathcal{A}_2(t)^\top L(t)-L(t)\mathcal{A}_2(t)\\
             &\qquad-L(t)\mathcal{B}(t)L(t)+\big[\mathcal{G}_2(t)-L(t)\mathcal{D}(t)\big]\Omega_3^{-1}(t-\delta)\big[\mathcal{G}_2(t)^\top-\mathcal{D}(t)^\top L(t)\big]\Big\}\mathbb{E}^{\mathcal{F}_{t-\delta}}[\phi(t)]\\
             &\quad+\Big\{-\bar{\mathcal{A}_1}(t)^\top[I-L(t)\bar{\mathcal{C}}(t)]^{-1}L(t)\mathcal{C}(t)^\top-\big[\bar{\mathcal{A}}_2(t)^\top+L(t)\mathcal{C}(t)\big][I-L(t)\bar{\mathcal{C}}(t)]^{-1}L(t)\mathcal{C}(t)^\top\\
             &\qquad-\mathcal{A}_2(t)^\top-L(t)\mathcal{B}(t)-\big[\mathcal{G}_2(t)-L(t)\mathcal{D}(t)\big]\Omega_3^{-1}(t-\delta)\mathcal{D}(t)^\top\Big\}\mathbb{E}^{\mathcal{F}_{t-\delta}}[\Theta(t)]\\
             &\quad-\Pi(t,t+\delta)I_{[0,T-\delta]}(t)\phi(t)-\mathcal{A}_1(t)^\top\Theta(t)-\bar{\mathcal{A}_1}(t)^\top\Theta_2(t)\\
             &\quad-\big[\bar{\mathcal{A}_1}(t)^\top L(t)\bar{\mathcal{C}}(t)+\bar{\mathcal{A}_2}(t)^\top+L(t)\mathcal{C}(t)\big][I-L(t)\bar{\mathcal{C}}(t)]^{-1}\mathbb{E}^{\mathcal{F}_{t-\delta}}[\Theta_2(t)],\quad t\in[\delta,T].
\end{aligned}\end{equation}
With some computations, it yields
\begin{equation}\begin{aligned}\label{eq4.21}
   &-\big[\bar{\mathcal{A}_2}(t)^\top+L(t)\mathcal{C}(t)\big][I-L(t)\bar{\mathcal{C}}(t)]^{-1}L(t)\big[\bar{\mathcal{A}_1}(t)+\bar{\mathcal{A}_2}(t)+\mathcal{C}(t)^\top L(t)\big]\\
   &-\bar{\mathcal{A}_1}(t)^\top L(t)\bar{\mathcal{C}}(t)[I-L(t)\bar{\mathcal{C}}(t)]^{-1}L(t)\big[\bar{\mathcal{A}_1}(t)+\bar{\mathcal{A}_2}(t)+\mathcal{C}(t)^\top L(t)\big]
    -\bar{\mathcal{A}_1}(t)^\top L(t)\bar{\mathcal{A}}_2(t)\\
   &-\bar{\mathcal{A}_1}(t)^\top L(t)\mathcal{C}(t)^\top L(t)=-\big[\bar{\mathcal{A}_1}(t)^\top+\bar{\mathcal{A}_2}(t)^\top+L(t)\mathcal{C}(t)][I-L(t)\bar{\mathcal{C}}(t)]^{-1}L(t)\big[\bar{\mathcal{A}_1}(t)\\
   &+\bar{\mathcal{A}_2}(t)+\mathcal{C}(t)^\top L(t)\big]+\bar{\mathcal{A}_1}(t)^\top L(t)\bar{\mathcal{A}}_1(t).
\end{aligned}\end{equation}
On the other hand, for $t\in[T-\delta,T]$, denote $\tilde{\Theta}_1(t)=-\int_t^T\Pi(t,\theta)\hat{\phi}(t|\theta-\delta)d\theta$, we can prove that $(\tilde{\Theta}_1(\cdot),0)$ is the solution to the BSDE
\begin{equation*}\left\{\begin{aligned}
  d\Theta(t)&=\Theta_1(t)dt+\Theta_2(t)dW(t),\quad t\in[T-\delta,T],\\
   \Theta(T)&=0.
\end{aligned}\right.\end{equation*}
Hence
\begin{equation*}
  \Theta(T-\delta)=\tilde{\Theta}_1(T-\delta)=-\int_{T-\delta}^T\Pi(T-\delta,\theta)\hat{\phi}(T-\delta|\theta-\delta)d\theta.
\end{equation*}
For $t\in[\delta,T-\delta]$, denote $\tilde{\Theta}_2(t)=-\int_t^{t+\delta}\Pi(t,\theta)\hat{\phi}(t|\theta-\delta)d\theta$, next we aim to prove $(\tilde{\Theta}_2(\cdot),0)$ is the solution to the BSDE
\begin{equation}\left\{\begin{aligned}\label{Theta}
        d\Theta(t)&=\Theta_1(t)dt+\Theta_2(t)dW(t),\ t\in[\delta,T-\delta],\\
  \Theta(T-\delta)&=-\int_{T-\delta}^T\Pi(T-\delta,\theta)\hat{\phi}(T-\delta|\theta-\delta)d\theta.
\end{aligned}\right.\end{equation}
Differentiating on $\tilde{\Theta}_2(t)$ with respect to $t$, we get
\begin{equation}\begin{aligned}\label{eq4.22}
  d\tilde{\Theta}_2(t)=&-\Pi(t,t+\delta)\phi(t)+\Pi(t,t)\hat{\phi}(t|t-\delta)-\int_t^{t+\delta}\dot{\Pi}(t,\theta)\hat{\phi}(t|\theta-\delta)d\theta\\
                       &-\int_t^{t+\delta}\Pi(t,\theta)\Big\{\mathcal{A}_1(t)\hat{\phi}(t|\theta-\delta)+\mathcal{A}_2(t)\hat{\phi}(t|t-\delta)+\mathcal{B}(t)\mathbb{E}^{\mathcal{F}_{t-\delta}}[\psi(t)]\\
                       &\qquad+\mathcal{C}(t)\mathbb{E}^{\mathcal{F}_{t-\delta}}[\bar{\psi}(t)]+\mathcal{D}(t)u_2(t-\delta)\Big\}d\theta,\quad t\in[\delta,T-\delta].
\end{aligned}\end{equation}
Noting
\begin{equation}\begin{aligned}\label{eq4.23}
  \mathbb{E}^{\mathcal{F}_{t-\delta}}[\tilde{\Theta}_2(t)]=-\int_t^{t+\delta}\Pi(t,\theta)d\theta\hat{\phi}(t|t-\delta),
\end{aligned}\end{equation}
and recalling (\ref{eq4.16}) and (\ref{eq4.17}), we have
\begin{equation}\begin{aligned}\label{eq4.24}
  &\quad\mathcal{B}(t)\mathbb{E}^{\mathcal{F}_{t-\delta}}[\psi(t)]+\mathcal{C}(t)\mathbb{E}^{\mathcal{F}_{t-\delta}}[\bar{\psi}(t)]+\mathcal{D}(t)u_2(t-\delta)\\
  &=\mathcal{B}(t)L(t)\mathbb{E}^{\mathcal{F}_{t-\delta}}[\phi(t)]+\mathcal{B}(t)\mathbb{E}^{\mathcal{F}_{t-\delta}}[\Theta(t)]+\mathcal{C}(t)[I-L(t)\bar{\mathcal{C}}(t)]^{-1}\Big\{\mathbb{E}^{\mathcal{F}_{t-\delta}}[\Theta_2(t)]\\
  &\quad+L(t)\bar{\mathcal{A}}_1(t)\mathbb{E}^{\mathcal{F}_{t-\delta}}[\phi(t)]+L(t)\bar{\mathcal{A}}_2(t)\mathbb{E}^{\mathcal{F}_{t-\delta}}[\phi(t)]+L(t)\mathcal{C}(t)^\top L(t)\mathbb{E}^{\mathcal{F}_{t-\delta}}[\phi(t)]\\
  &\quad+L(t)\mathcal{C}(t)^\top\mathbb{E}^{\mathcal{F}_{t-\delta}}[\Theta(t)]\Big\}-\mathcal{D}(t)\Omega_3^{-1}(t-\delta)\Big\{\big[\mathcal{D}(t)^TL(t)-\mathcal{G}_2(t)^T\big]\mathbb{E}^{\mathcal{F}_{t-\delta}}[\phi(t)]\\
  &\quad+\mathcal{D}(t)^T\mathbb{E}^{\mathcal{F}_{t-\delta}}[\Theta(t)]\Big\},\quad t\in[\delta,T].
\end{aligned}\end{equation}
Plugging (\ref{Pi-equation}), (\ref{eq4.23}) and (\ref{eq4.24}) into (\ref{eq4.22}), combing (\ref{eq4.20}) and (\ref{eq4.21}), it is easy to verify that $(\tilde{\Theta}_2(\cdot),0)$ is the solution to BSDE (\ref{Theta}). Thus we complete the proof.
\end{proof}

\vspace{1mm}

Now we give the necessary condition of the solvability for Problem \textbf{(L-DLQ)}:
\begin{mythm}\label{thm4.2}
Let \textbf{(A1)-(A3)} hold, assume $\bar{Q}_i(t)=\bar{R}_i(t)=0$ for $t\in [T,T+\delta]$, $i=1,2$. Suppose the matrix equation $(\ref{L-equation})$ and $(\ref{Pi-equation})$ have the unique solutions $L(\cdot)$ and $\Pi(\cdot,\cdot)$ respectively. Let $\bar{u}_2(\cdot)$ be the optimal control of the leader and $\bar{X}(\cdot)$ be the optimal state strategy for the Problem (\textbf{L-DLQ}), then $\bar{u}_2(\cdot)$ is of the following state feedback form:
\begin{equation}\begin{aligned}\label{u2-feedback}
   \bar{u}_2(t)=&-\Omega_3^{-1}(t)\Big\{\big[\mathcal{D}(t+\delta)^\top L(t+\delta)-\mathcal{G}_2(t+\delta)^\top\big]\\
                &\qquad\quad-\mathcal{D}(t+\delta)^\top\int_{t+\delta}^{(t+2\delta)\wedge T}\Pi(t+\delta,\theta)d\theta\Big\}\hat{\phi}(t+\delta|t),\quad a.e.\ t\in[0,T],\ \mathbb{P}\mbox{-}a.s.
\end{aligned}\end{equation}
Moreover, the optimal cost of the leader can be expressed as follows:
\begin{equation}\begin{aligned}\label{leaderer-optimal-cost}
  &\underset{u_2(\cdot)\in\mathcal{U}_2[0,T]}{\inf}J_2(\bar{u}_1(\cdot),u_2(\cdot))=J_2(\bar{u}_1(\cdot),\bar{u}_2(\cdot))=\mathbb{E}\bigg\{\big\langle\varphi(0),P_2(0)\varphi(0)+\zeta_2(0)\big\rangle\\
  &\quad+\int_{-\delta}^0\Big[\big\langle\varphi(t),\bar{A}(t+\delta)^\top p_2(t+\delta)+\bar{C}(t+\delta)^\top q_2(t+\delta)+\bar{Q}_2(t+\delta)\varphi(t)\big\rangle\\
  &\qquad\qquad+\big\langle\eta_1(t),\bar{B}_1(t+\delta)^\top p_2(t+\delta)+\bar{D}_1(t+\delta)^\top q_2(t+\delta)\big\rangle+\big\langle\eta_2(t),\bar{R}_2(t+\delta)\eta_2(t)\\
  &\qquad\qquad+\bar{B}_2(t+\delta)^\top p_2(t+\delta)+\bar{D}_2(t+\delta)^\top q_2(t+\delta)-\hat{N}_2(t+\delta)^\top\xi(t+\delta)\big\rangle\Big]dt\bigg\},
\end{aligned}\end{equation}
where $P_2(\cdot)$ satisfies (\ref{P2-equation}), $(\xi(\cdot),p_2(\cdot),q_2(\cdot))$ satisfies (\ref{p2-equation}) and $(\zeta_2(\cdot),\bar{\zeta}_2(\cdot))$ satisfies (\ref{zeta2-equation}).
\end{mythm}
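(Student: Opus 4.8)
The plan is to treat the two assertions separately: the feedback representation (\ref{u2-feedback}) follows almost immediately from Lemma \ref{lem4.1}, while the optimal-cost formula (\ref{leaderer-optimal-cost}) is the genuinely computational part and is obtained by an adjoint/duality argument parallel to the one used for the follower in Theorem \ref{thm3.2}.

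For the feedback form, I would build directly on the intermediate identity (\ref{eq4.17}) established inside the proof of Lemma \ref{lem4.1}, which already writes $\bar{u}_2(t)$ in terms of $\mathbb{E}^{\mathcal{F}_t}[\phi(t+\delta)]$ and $\mathbb{E}^{\mathcal{F}_t}[\Theta(t+\delta)]$, where $\Theta(t)=\psi(t)-L(t)\phi(t)$. By Lemma \ref{lem4.1} one has $\Theta(t+\delta)=-\int_{t+\delta}^{(t+2\delta)\wedge T}\Pi(t+\delta,\theta)\hat{\phi}(t+\delta|\theta-\delta)\,d\theta$. Taking $\mathbb{E}^{\mathcal{F}_t}[\cdot]$ and invoking the tower property is the key: on the range of integration $\theta-\delta\geq t$, so $\mathcal{F}_t\subseteq\mathcal{F}_{\theta-\delta}$ and $\mathbb{E}^{\mathcal{F}_t}[\hat{\phi}(t+\delta|\theta-\delta)]=\hat{\phi}(t+\delta|t)$, which collapses the inner conditioning and gives $\mathbb{E}^{\mathcal{F}_t}[\Theta(t+\delta)]=-\int_{t+\delta}^{(t+2\delta)\wedge T}\Pi(t+\delta,\theta)\,d\theta\,\hat{\phi}(t+\delta|t)$. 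Substituting this together with $\mathbb{E}^{\mathcal{F}_t}[\phi(t+\delta)]=\hat{\phi}(t+\delta|t)$ into (\ref{eq4.17}) produces exactly (\ref{u2-feedback}).

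For the optimal cost I would apply It\^o's formula to the bilinear pairings $\langle p_2(\cdot),\bar{X}(\cdot)\rangle$ and $\langle\xi(\cdot),\zeta_1(\cdot)\rangle$ on $[0,T]$, exploiting the natural adjoint relation between the forward SDDE for $\bar{X}$ and the backward ABSDE for $(p_2,q_2)$, and between $\zeta_1$ and $\xi$, so that the cross terms generated by the $\zeta_1$-dependent coefficients $\hat{F},\hat{H},\hat{K},\hat{M}$ in (\ref{leader state}) cancel against those from the $\xi$-equation in (\ref{p2-equation}). Using (\ref{leader state}), (\ref{p2-equation}), the representation (\ref{eq4.6}) and the optimality condition (\ref{u2-optimal}), the running cost of $J_2$ is recovered; the terminal pairing yields $\langle G_2\bar{X}(T),\bar{X}(T)\rangle$ and the initial pairing yields $\langle p_2(0),\varphi(0)\rangle=\langle\varphi(0),P_2(0)\varphi(0)+\zeta_2(0)\rangle$. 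The delayed arguments $\bar{X}(t-\delta),u_2(t-\delta)$ are converted by the shift $s=t-\delta$ into integrals over $[-\delta,0]$ against the initial data $\varphi,\eta_1,\eta_2$, while the anticipating terms $\mathbb{E}^{\mathcal{F}_t}[\,\cdot\,(t+\delta)]$ are reorganized by the tower property and the vanishing conventions $\xi|_{[-\delta,0]}=0$, $p_2|_{(T,T+\delta]}=0$, $q_2|_{[T,T+\delta]}=0$; together these generate the boundary integral $\int_{-\delta}^0[\cdots]\,dt$ in (\ref{leaderer-optimal-cost}).

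The hard part will be the bookkeeping in this second step. Since the delay enters both drift and diffusion and the system couples a forward SDDE with a backward ABSDE, the It\^o expansion produces many cross terms in $P_2,\zeta_2,\xi,\zeta_1,\bar{\zeta}_1$; the delicate points are verifying that the anticipating conditional-expectation terms telescope correctly and that the index shifts align the $[-\delta,0]$ contributions exactly in the pattern of the follower's boundary data (\ref{I-delta}). The feedback assertion, by contrast, is essentially immediate once Lemma \ref{lem4.1} and the tower property are in hand.
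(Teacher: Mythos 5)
Your proposal is correct and takes essentially the same route as the paper: the paper likewise obtains (\ref{u2-feedback}) by combining (\ref{eq4.17}) with $\Theta(t)=\psi(t)-L(t)\phi(t)$ and the representation (\ref{relationship}) of Lemma \ref{lem4.1} (your tower-property observation $\mathbb{E}^{\mathcal{F}_t}\big[\hat{\phi}(t+\delta|\theta-\delta)\big]=\hat{\phi}(t+\delta|t)$ for $\theta-\delta\geq t$ is exactly the detail that collapses the inner conditioning), and it proves (\ref{leaderer-optimal-cost}) by applying It\^o's formula to the same combination $\big\langle p_2(\cdot),\bar{X}(\cdot)\big\rangle-\big\langle\zeta_1(\cdot),\xi(\cdot)\big\rangle$, shifting the delayed terms onto $[-\delta,0]$, using (\ref{eq4.6}) at $t=0$, and invoking the stationarity condition (\ref{u2-optimal}) to annihilate the remaining running integral. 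No gaps beyond the bookkeeping you already flag, which is also where the paper's computation lives.
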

\begin{proof}
The feedback form (\ref{u2-feedback}) can be obtained by (\ref{relationship}), (\ref{eq4.13}) and (\ref{eq4.17}). Thus we only need to prove (\ref{leaderer-optimal-cost}). Recall (\ref{leader state}) and (\ref{p2-equation}), applying It\^o's formula to $\big\langle p_2(\cdot),\bar{X}(\cdot)\rangle-\langle\zeta_1(\cdot),\xi(\cdot)\big\rangle$, we deduce
\begin{equation}\begin{aligned}\label{eq4.27}
  &J_2(\bar{u}_1(\cdot),\bar{u}_2(\cdot))=\mathbb{E}\big[\langle p_2(0),\varphi(0)\rangle\big]+\mathbb{E}\int_{-\delta}^0\Big[\big\langle\varphi(t),\bar{A}(t+\delta)^\top p_2(t+\delta)+\bar{C}(t+\delta)^\top q_2(t+\delta)\\
  &+\bar{Q}_2(t+\delta)\varphi(t)\big\rangle+\big\langle\eta_1(t),\bar{B}_1(t+\delta)^\top p_2(t+\delta)+\bar{D}_1(t+\delta)^\top q_2(t+\delta)\big\rangle+\big\langle\eta_2(t),\bar{R}_2(t+\delta)\eta_2(t)\\
  &+\bar{B}_2(t+\delta)^\top p_2(t+\delta)+\bar{D}_2(t+\delta)^\top q_2(t+\delta)-\hat{N}_2(t+\delta)^\top\xi(t+\delta)\big\rangle\Big] dt\\
  &+\mathbb{E}\int_0^T\big\langle\bar{u}_2(t),\big[R_2(t)+\bar{R}_2(t+\delta)\big]\bar{u}_2(t)+\mathbb{E}^{\mathcal{F}_t}\big[\hat{B}(t+\delta)^\top p_2(t+\delta)-\hat{N}_2(t+\delta)^T\xi(t+\delta)\big]\big\rangle dt.
\end{aligned}\end{equation}
Since $(\ref{u2-optimal})$ holds, we obtain
\begin{equation*}\begin{aligned}\label{eq4.28}
  &J_2(\bar{u}_1(\cdot),\bar{u}_2(\cdot))=\mathbb{E}\big[\langle p_2(0),\varphi(0)\rangle\big]+\mathbb{E}\int_{-\delta}^0\Big[\big\langle\varphi(t),\bar{A}(t+\delta)^\top p_2(t+\delta)+\bar{C}(t+\delta)^\top q_2(t+\delta)\\
  &\quad+\bar{Q}_2(t+\delta)\varphi(t)\big\rangle+\big\langle\eta_1(t),\bar{B}_1(t+\delta)^\top p_2(t+\delta)+\bar{D}_1(t+\delta)^\top q_2(t+\delta)\big\rangle+\big\langle\eta_2(t),\bar{R}_2(t+\delta)\eta_2(t)\\
  &\quad+\bar{B}_2(t+\delta)^\top p_2(t+\delta)+\bar{D}_2(t+\delta)^\top q_2(t+\delta)-\hat{N}_2(t+\delta)^\top\xi(t+\delta)\big\rangle\Big]dt\\
  &=\mathbb{E}\Big\{\big\langle\varphi(0),P_2(0)\varphi(0)+\zeta_2(0)\big\rangle+\int_{-\delta}^0\Big[\big\langle\varphi(t),\bar{A}(t+\delta)^\top p_2(t+\delta)+\bar{C}(t+\delta)^\top q_2(t+\delta)\\
  &\qquad+\bar{Q}_2(t+\delta)\varphi(t)\big\rangle+\big\langle\eta_1(t),\bar{B}_1(t+\delta)^\top p_2(t+\delta)+\bar{D}_1(t+\delta)^\top q_2(t+\delta)\big\rangle+\big\langle\eta_2(t),\bar{R}_2(t+\delta)\\
  &\qquad\eta_2(t)+\bar{B}_2(t+\delta)^\top p_2(t+\delta)+\bar{D}_2(t+\delta)^\top q_2(t+\delta)-\hat{N}_2(t+\delta)^\top\xi(t+\delta)\big\rangle\Big]dt\Big\},
\end{aligned}\end{equation*}
which completes the proof.
\end{proof}

\vspace{1mm}

Finally we summarize the above contents and state the main results for the linear quadratic Stackelberg differential game with time delay.

\begin{mythm}\label{thm4.3}
Let \textbf{(A1)-(A3)} hold, assume $\bar{Q}_i(t)=\bar{R}_i(t)=0$ for $t\in [T,T+\delta]$, $i=1,2$. Suppose $(\bar{u}_1(\cdot),\bar{u}_2(\cdot))$ is the optimal open-loop strategy and the matrix equation $(\ref{L-equation})$ and $(\ref{Pi-equation})$ have the unique solutions $L(\cdot)$ and $\Pi(\cdot,\cdot)$ respectively, then the optimal open-loop strategy is given by
\begin{equation}\begin{aligned}\label{strategy-1}
  \bar{u}_1(t-\delta)&=K_1^{\bar{u}_1}(t-\delta)\phi(t-\delta)+K_2^{\bar{u}_1}(t-\delta)\hat{\phi}(t|t-\delta),\quad a.e.\ t\in[\delta,T+\delta],\ \mathbb{P}\mbox{-}a.s.,
\end{aligned}\end{equation}
\begin{equation}\hspace{-4.6cm}\begin{aligned}\label{strategy-2}
  \bar{u}_2(t-\delta)&=K^{\bar{u}_2}(t-\delta)\hat{\phi}(t|t-\delta),\quad a.e.\ t\in[\delta,T+\delta],\ \mathbb{P}\mbox{-}a.s.,
\end{aligned}\end{equation}
where
\begin{equation*}\begin{aligned}
     \hat{\phi}(t|t-\delta)&\triangleq\mathbb{E}^{\mathcal{F}_{t-\delta}}[\phi(t)],\\
    K^{\bar{u}_2}(t-\delta)&\triangleq-\Omega_3^{-1}(t-\delta)\bigg\{\mathcal{D}(t)^\top L(t)-\mathcal{G}_2(t)^\top-\mathcal{D}(t)^\top\int_{t}^{(t+\delta)\wedge T}\Pi(t,\theta)d\theta\bigg\},\\
  K_1^{\bar{u}_1}(t-\delta)&\triangleq-\Omega_1^{-1}(t-\delta)\big[0,\bar{D}_1(t)^\top P_1(t)\bar{C}(t)\big],\\
\end{aligned}\end{equation*}
\begin{equation}\begin{aligned}\label{coefficient}
  K_2^{\bar{u}_1}(t-\delta)&\triangleq\Omega_1^{-1}(t-\delta)\bigg\{\bar{D}_1(t)^\top P_1(t)\bar{D}_2(t)\Omega_3^{-1}(t-\delta)\big[\mathcal{D}(t)^\top L(t)-\mathcal{G}_2(t)^\top\big]\\
                           &\qquad-\big[\bar{B}_1(t)^\top,0\big]L(t)-\big[\bar{D}_1(t)^\top,0\big][I-L(t)\bar{\mathcal{C}}(t)]^{-1}L(t)\big[\bar{\mathcal{A}_1}(t)+\bar{\mathcal{A}_2}(t)\\
                           &\qquad+\mathcal{C}(t)^\top L(t)\big]-\Big[\bar{D}_1(t)^\top P_1(t)\bar{D}_2(t)\Omega_3^{-1}(t-\delta)\mathcal{D}(t)^\top-\big[\bar{B}_1(t)^\top,0\big]\\
                           &\qquad-\big[\bar{D}_1(t)^\top,0\big][I-L(t)\bar{\mathcal{C}}(t)]^{-1}L(t)\mathcal{C}(t)^T\Big]\int_t^{(t+\delta)\wedge T}\Pi(t,\theta)d\theta\bigg\}.
\end{aligned}\end{equation}
\end{mythm}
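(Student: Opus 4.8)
The plan is to assemble the two feedback laws directly from the results already established for the follower and the leader, together with the nonhomogeneous relationship of Lemma \ref{lem4.1}, and then to verify that the collected coefficients coincide with $K^{\bar{u}_2}$, $K_1^{\bar{u}_1}$, $K_2^{\bar{u}_1}$ in (\ref{coefficient}). The leader's law (\ref{strategy-2}) requires no new work: replacing $t$ by $t-\delta$ in the feedback (\ref{u2-feedback}) of Theorem \ref{thm4.2} turns $\hat{\phi}(t+\delta|t)$ into $\hat{\phi}(t|t-\delta)$ and exhibits exactly the coefficient $K^{\bar{u}_2}(t-\delta)$, valid for $t-\delta\in[0,T]$.

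For the follower's law (\ref{strategy-1}), I would start from the feedback (\ref{u1-feedback}) of Theorem \ref{thm3.2} and shift $t\mapsto t-\delta$. The genuine state-feedback part $-\Omega_1^{-1}(t-\delta)\bar{D}_1(t)^\top P_1(t)\bar{C}(t)\bar{X}(t-\delta)$ is rewritten using the fact that $\bar{X}(t-\delta)$ is the second block of $\phi(t-\delta)$, so that $\bar{D}_1(t)^\top P_1(t)\bar{C}(t)\bar{X}(t-\delta)=\big[0,\ \bar{D}_1(t)^\top P_1(t)\bar{C}(t)\big]\phi(t-\delta)$; this produces precisely the term $K_1^{\bar{u}_1}(t-\delta)\phi(t-\delta)$.

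The core of the argument is to re-express the anticipating terms $\mathbb{E}^{\mathcal{F}_{t-\delta}}\big[\bar{B}_1(t)^\top\zeta_1(t)+\bar{D}_1(t)^\top\bar{\zeta}_1(t)\big]$ and the control $\bar{u}_2(t-\delta)$ through $\hat{\phi}(t|t-\delta)$. Writing $\bar{B}_1(t)^\top\zeta_1(t)=[\bar{B}_1(t)^\top,0]\psi(t)$ and $\bar{D}_1(t)^\top\bar{\zeta}_1(t)=[\bar{D}_1(t)^\top,0]\bar{\psi}(t)$, I would apply $\mathbb{E}^{\mathcal{F}_{t-\delta}}$ to the relationship (\ref{relationship}); because $\mathcal{F}_{t-\delta}\subseteq\mathcal{F}_{\theta-\delta}$ for $\theta\ge t$, the tower property collapses $\mathbb{E}^{\mathcal{F}_{t-\delta}}[\hat{\phi}(t|\theta-\delta)]$ to $\hat{\phi}(t|t-\delta)$, yielding
\begin{equation*}
  \mathbb{E}^{\mathcal{F}_{t-\delta}}[\psi(t)]=\Big[L(t)-\int_t^{(t+\delta)\wedge T}\Pi(t,\theta)d\theta\Big]\hat{\phi}(t|t-\delta).
\end{equation*}
For $\bar{\psi}$ I would use (\ref{eq4.16}) together with the fact, established inside the proof of Lemma \ref{lem4.1}, that the martingale part $\Theta_2$ of $\Theta=\psi-L\phi$ vanishes, so $\mathbb{E}^{\mathcal{F}_{t-\delta}}[\Theta_2(t)]=0$; substituting the expression just obtained for $\mathbb{E}^{\mathcal{F}_{t-\delta}}[\psi(t)]$ then gives $\mathbb{E}^{\mathcal{F}_{t-\delta}}[\bar{\psi}(t)]=[I-L(t)\bar{\mathcal{C}}(t)]^{-1}L(t)\big\{\bar{\mathcal{A}}_1(t)+\bar{\mathcal{A}}_2(t)+\mathcal{C}(t)^\top L(t)-\mathcal{C}(t)^\top\int_t^{(t+\delta)\wedge T}\Pi(t,\theta)d\theta\big\}\hat{\phi}(t|t-\delta)$. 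Finally I would insert $\bar{u}_2(t-\delta)=K^{\bar{u}_2}(t-\delta)\hat{\phi}(t|t-\delta)$ from the first part.

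Collecting the four contributions, namely the two block projections of $\mathbb{E}^{\mathcal{F}_{t-\delta}}[\psi]$ and $\mathbb{E}^{\mathcal{F}_{t-\delta}}[\bar{\psi}]$, and the term $\bar{D}_1(t)^\top P_1(t)\bar{D}_2(t)K^{\bar{u}_2}(t-\delta)$, multiplying through by $-\Omega_1^{-1}(t-\delta)$ and grouping the pieces multiplying $\hat{\phi}(t|t-\delta)$ reproduces $K_2^{\bar{u}_1}(t-\delta)$ in (\ref{coefficient}). The assertion on the interval $t\in[\delta,T+\delta]$ follows since the shifted formulas are images of identities valid a.e. on $[0,T]$, with the coefficients degenerating appropriately on $(T,T+\delta]$ via the terminal conditions $P_1=0$ and $\psi=\bar{\psi}=0$. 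I expect the only real difficulty to be bookkeeping: keeping the factor $[I-L\bar{\mathcal{C}}]^{-1}$, the two distinct weights $\Omega_1^{-1}$ and $\Omega_3^{-1}$, and the term $\int_t^{(t+\delta)\wedge T}\Pi(t,\theta)d\theta$ correctly separated while checking term-by-term agreement with (\ref{coefficient}); no analytic input beyond Theorems \ref{thm3.2}, \ref{thm4.2} and Lemma \ref{lem4.1} is needed.
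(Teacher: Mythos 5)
Your proposal is correct and takes essentially the same route as the paper's own proof: the paper likewise obtains (\ref{strategy-2}) directly from (\ref{u2-feedback}), substitutes it into (\ref{u1-feedback}) to get its intermediate identity (\ref{eq4.32}), and then plugs the relationship (\ref{relationship}) of Lemma \ref{lem4.1} together with (\ref{eq4.16}) into that identity to deduce (\ref{strategy-1}). Your extra explicitness --- the tower-property collapse $\mathbb{E}^{\mathcal{F}_{t-\delta}}[\hat{\phi}(t|\theta-\delta)]=\hat{\phi}(t|t-\delta)$ and the vanishing of $\Theta_2$ established inside the proof of Lemma \ref{lem4.1} --- only spells out steps the paper leaves implicit.
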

\begin{proof}
(\ref{strategy-2}) is the same as (\ref{u2-feedback}). Thus we only need to prove (\ref{strategy-1}).
Substituting (\ref{strategy-2}) into (\ref{u1-feedback}), it follows that
\begin{equation}\begin{aligned}\label{eq4.32}
  \bar{u}_1(t-\delta)&=-\Omega_1^{-1}(t-\delta)\Big\{\bar{D}_1(t)^\top P_1(t)\bar{C}(t)\bar{X}(t-\delta)+\mathbb{E}^{\mathcal{F}_{t-\delta}}\big[\bar{B}_1(t)^T\zeta_1(t)\\
                     &\qquad+\bar{D}_1(t)^\top\bar{\zeta}_1(t)]+\bar{D}_1(t)^\top P_1(t)\bar{D}_2(t)\bar{u}_2(t-\delta)\Big\}\\
                     &=-\Omega_1^{-1}(t-\delta)\bigg\{\big[0,\bar{D}_1(t)^\top P_1(t)\bar{C}(t)\big]\phi(t-\delta)+\big[\bar{B}_1(t)^\top,0\big]\mathbb{E}^{\mathcal{F}_{t-\delta}}[\psi(t)]\\
                     &\qquad+\big[\bar{D}_1(t)^\top,0\big]\mathbb{E}^{\mathcal{F}_{t-\delta}}[\bar{\psi}(t)]-\bar{D}_1(t)^\top P_1(t)\bar{D}_2(t)\Omega_3^{-1}(t-\delta)\bigg\{\mathcal{D}(t)^\top L(t)\\
                     &\qquad-\mathcal{G}_2(t)^\top-\mathcal{D}(t)^\top\int_{t}^{(t+\delta)\wedge T}\Pi(t,\theta)d\theta\bigg\}\hat{\phi}(t|t-\delta)\bigg\}.
\end{aligned}\end{equation}
Plugging (\ref{relationship}) and (\ref{eq4.16}) into (\ref{eq4.32}), we can deduce (\ref{strategy-1}). Hence we complete the proof.
\end{proof}

\begin{Remark}\label{rem4.3}
From Theorem \ref{thm4.3}, to derive the state feedback expression of the open-loop Stackelberg strategy $(\bar{u}_1(\cdot),\bar{u}_2(\cdot))$, where we impose the assumptions \textbf{(A1)-(A3)} to the coefficients of (\ref{system equation}) and (\ref{cost}). Especially, in the one-dimensional case, we can obtain the explicit solutions to the Pseudo-Riccati equation (\ref{P1-equation}) and (\ref{P2-equation}) as follows:
\begin{equation*}\begin{aligned}
  &P_1(t)=G_1e^{\int_t^T(2A(s)+C^2(s))}ds+\int_t^Te^{\int_t^s(2A(r)+C^2(r))dr}\big[Q_1(s)+\bar{Q}_1(s+\delta)\big]ds,\\
  &P_2(t)=G_2e^{\int_t^T(2A(s)+C^2(s))ds}+\int_t^Te^{\int_t^s(2A(r)+C^2(r))dr}\big[Q_2(s)+\bar{Q}_2(s+\delta)\big]ds.
\end{aligned}\end{equation*}
Therefore \textbf{(A1)-(A3)} can be simplified in the following two cases:
\begin{enumerate}
 \item $P_1>0,\quad P_2\neq0.$
   \begin{equation*}\begin{aligned}
    &\bar{B}_1(t)+C(t)\bar{D}_1(t)=0,\quad\bar{A}(t)+C(t)\bar{C}(t)=0,\quad\bar{B}_2(t)+C(t)\bar{D}_2(t)=0,\quad \forall\ t\in[0,T],\\
    &\bar{D}_1\neq0,\quad\bar{R}_1(t)+R_1(t-\delta)=0,\quad R_2(t-\delta)+\bar{R}_2(t)>0,\quad \forall\ t\in[0,T].
   \end{aligned}\end{equation*}
 \item $P_1>0,\quad P_2=0.$
   \begin{equation*}\begin{aligned}
    &\bar{B}_1(t)+C(t)\bar{D}_1(t)=0,\quad\bar{A}(t)+C(t)\bar{C}(t)=0,\quad \forall\ t\in[0,T].\\
    &\bar{D}_1\neq0,\quad\bar{R}_1(t)+R_1(t-\delta)=0,\quad R_2(t-\delta)+\bar{R}_2(t)>0,\quad \forall\ t\in[0,T].
   \end{aligned}\end{equation*}
\end{enumerate}

\end{Remark}

\section{Applications}

In this section, we will display two examples, one example is a simple one-dimensional case and can be solved completely. Another example is related to a class of resource allocation problems, although the explicit solution can not be obtained, the numerical algorithm is given and some simulations are done.

\subsection{One-dimensional case}
Suppose
\begin{equation*}\begin{aligned}
  &A=\bar{A}=\bar{C}=0,\quad C=-1,\quad\bar{B}_1(\cdot)=\bar{D}_1(\cdot)\neq 0,\quad\bar{B}_2(\cdot)=\bar{D}_2(\cdot)\neq 0,\\
  &R_1(\cdot)+\bar{R}_1(\cdot+\delta)=0,\quad G_1=G_2=G>0,\\
  &Q_1(\cdot)+\bar{Q}_1(\cdot+\delta)=Q_2(\cdot)+\bar{Q}_2(\cdot+\delta)>0.
\end{aligned}\end{equation*}
Thu the linear controlled SDDE (\ref{system equation}) is reduced to the following:
\begin{equation}\label{ex1-system equation}\left\{\begin{aligned}
   dX(t)&=\big[\bar{B}_1(t)u_1(t-\delta)+\bar{B}_2(t)u_2(t-\delta)\big]dt\\
        &\quad+\big[-X(t)+\bar{D}_1(t)u_1(t-\delta)+\bar{D}_2(t)u_2(t-\delta)\big]dW(t),\ t\in[0,T],\\
  u_1(t)&=\eta_1(t),\ u_2(t)=\eta_2(t),\ t\in[-\delta,0],
\end{aligned}\right.\end{equation}
and the cost functionals of the leader and follower are as follows: For $i=1,2$,
\begin{equation}\begin{aligned}\label{ex-1cost}
  J_i(u_1(\cdot),u_2(\cdot))&=\mathbb{E}\bigg[\int_0^T\Big[X(t)^\top Q_i(t)X(t)+X(t-\delta)^\top\bar{Q}_i(t)X(t-\delta)+u_i(t)^\top R_i(t)u_i(t)\\
                            &\qquad\qquad\quad+u_i(t-\delta)^\top\bar{R}_i(t)u_i(t-\delta)]dt+X(T)^\top GX(T)\bigg].
\end{aligned}\end{equation}
It is obvious that the coefficients satisfies \textbf{(A1)-(A3)}, thus
\begin{equation*}
  \Omega_1(t)\equiv P_1(t+\delta)\bar{D}_1^2(t+\delta),\quad \Omega_2(t)=\Omega_3(t)\equiv R_2(t)+\bar{R}_2(t+\delta),
\end{equation*}
where $P_1(\cdot)$ is the solution to the following linear ODE:
\begin{equation*}\left\{\begin{aligned}
  \dot{P}_1(t)&=-P_1(t)-Q_1(t)-\bar{Q}_1(t+\delta),\ t\in[0,T],\\
        P_1(T)&=G.
\end{aligned}\right.\end{equation*}
It is easy to verify that $P_1(t)=Ge^{T-t}+\int_t^Te^{s-t}[Q_1(s)+\bar{Q}_1(s+\delta)]ds>0$, hence $\Omega_1>0$ and $\Omega_1^{-1}(t)=P_1^{-1}(t+\delta)\bar{D}_1^{-2}(t+\delta)$. Now (\ref{P2-equation}) reads
\begin{equation*}\left\{\begin{aligned}
  \dot{P}_2(t)&=-P_2(t)-Q_2(t)-\bar{Q}_2(t+\delta),\ t\in[0,T],\\
        P_2(T)&=G,
\end{aligned}\right.\end{equation*}
and it follows that $P_2(t)=Ge^{T-t}+\int_t^Te^{s-t}[Q_2(s)+\bar{Q}_2(s+\delta)]ds=P_1(t)>0$. For simplicity, assume $\bar{B}_1(t)=\bar{B}_2(t)=\bar{D}_1(t)=\bar{D}_2(t)=0$ for $t\in[0,\delta]$. In this case,
\begin{eqnarray*}\begin{aligned}
  &\mathcal{A}_1\equiv\mathcal{A}_2\equiv\mathcal{A}_3\equiv\bar{\mathcal{A}}_2\equiv\bar{\mathcal{A}}_3\equiv\mathcal{E}\equiv0,\quad\quad\quad\quad
  \mathcal{D}\equiv\bar{\mathcal{D}}\equiv\mathcal{G}_1\equiv\mathcal{G}_2\equiv\mathcal{M}\equiv\bar{\mathcal{M}}\equiv0,\\
  &\mathcal{B}(t)=\mathcal{C}(t)= \bar{\mathcal{C}}(t)=P_1^{-1}(t)I_{[\delta,T]}(t)\begin{bmatrix}
  1 & -1\\
  -1 & 0
  \end{bmatrix},\quad\bar{\mathcal{A}}_1(t)= \begin{bmatrix}
  -1 & 0\\
  0 & -1
  \end{bmatrix}.
\end{aligned}\end{eqnarray*}
Denote
\begin{equation*}\begin{aligned}
  L(t)\triangleq \begin{bmatrix}
  L_1(t) & L_2(t)\\
  L_2(t) & L_3(t)
  \end{bmatrix},\quad\quad\quad \Pi(t,\theta)\triangleq \begin{bmatrix}
  \Pi_1(t,\theta) & \Pi_2(t,\theta)\\
  \Pi_2(t,\theta) & \Pi_3(t,\theta)
  \end{bmatrix},
\end{aligned}\end{equation*}
and note that
\begin{equation*}\begin{aligned}
  &[I-L(t)\bar{\mathcal{C}}(t)]^{-1}=\begin{bmatrix}
  1-L_1(t)P_1^{-1}(t)+L_2(t)P_1^{-1}(t) & L_1(t)P_1^{-1}(t)\\
  -L_2(t)P_1^{-1}(t)+L_3(t)P_1^{-1}(t) & 1+L_2(t)P_1^{-1}(t)
  \end{bmatrix}^{-1}\\
  &=\frac{P_1(t)}{(P_1(t)+L_2(t))^2-L_1(t)(P_1(t)+L_3(t))}\begin{bmatrix}
  P_1(t)+L_2(t) & -L_1(t)\\
  L_2(t)-L_3(t) & P_1(t)+L_2(t)-L_1(t)
  \end{bmatrix},\ t\in[\delta,T].
\end{aligned}\end{equation*}
Hence we have (omitting $t$)
\begin{equation*}\begin{aligned}
  &[I-L\bar{\mathcal{C}}]^{-1}L\big[\bar{\mathcal{A}_1}+\bar{\mathcal{A}_2}+\mathcal{C}^\top L\big]\\
  &=\frac{P_1}{(P_1+L_2)^2-L_1(P_1+L_3)}\begin{bmatrix}
  P_1L_1 & P_1L_2+L^2_2-L_1L_3\\
  P_1L_2+L^2_2-L_1L_3 & P_1L_3+L^2_2-L_1L_3
  \end{bmatrix}\\
  &\quad\times\begin{bmatrix}
  -1+P_1^{-1}(L_1-L_2) & P_1^{-1}(L_2-L_3)\\
  -P_1^{-1}L_1 & -1-P_1^{-1}L_2
  \end{bmatrix},\ t\in[\delta,T].
\end{aligned}\end{equation*}
It follows that for $t\in[\delta,T]$,
\begin{equation*}\begin{aligned}
  \Xi_1(t)&=P_1^{-1}\begin{bmatrix}
  L_1-L_2 & L_2-L_3\\
  -L_1 & -L_2
  \end{bmatrix}+\frac{1}{(P_1+L_2)^2-L_1(P_1+L_3)}\begin{bmatrix}
  1 & -1\\
  -1 & 0
  \end{bmatrix}\\
  &\quad\times\begin{bmatrix}
  P_1L_1 & P_1L_2+L^2_2-L_1L_3\\
  P_1L_2+L^2_2-L_1L_3 & P_1L_3+L^2_2-L_1L_3
  \end{bmatrix}\begin{bmatrix}
  -1+P_1^{-1}(L_1-L_2) & P_1^{-1}(L_2-L_3)\\
  -P_1^{-1}L_1 & -1-P_1^{-1}L_2
  \end{bmatrix},\\
  \Xi_2(t)&=P_1^{-1}\begin{bmatrix}
  -1 & 1\\
  1 & 0
  \end{bmatrix}-\frac{P_1^{-1}}{(P_1+L_2)^2-L_1(P_1+L_3)} \begin{bmatrix}
  1 & -1\\
  -1 & 0\\
  \end{bmatrix}\\
  &\quad\times
  \begin{bmatrix}
  P_1+L_2 & -L_1\\
  L_2-L_3 & P_1+L_2-L_1
  \end{bmatrix}\begin{bmatrix}
  L_1-L_2 & -L_1\\
  L_2-L_3 & -L_2
  \end{bmatrix},
\end{aligned}\end{equation*}
\begin{equation*}\begin{aligned}
  \Xi_3(t)&=-P_1^{-1}\begin{bmatrix}
  L^2_1-2L_1L_2 & L_1L_2-L^2_2-L_1L_3\\
  L_1L_2-L^2_2-L_1L_3 & L^2_2-2L_2L_3
  \end{bmatrix}+\begin{bmatrix}
  L_1 & L_2\\
  L_2 & L_3
  \end{bmatrix}\\
  &\quad-\frac{P_1}{(P_1+L_2)^2-L_1(P_1+L_3)}\begin{bmatrix}
  -1+P_1^{-1}(L_1-L_2) & -P_1^{-1}L_1\\
  P_1^{-1}(L_2-L_3) & -1-P_1^{-1}L_2
  \end{bmatrix}\\
  &\quad\times\begin{bmatrix}
  P_1L_1 & P_1L_2+L^2_2-L_1L_3\\
  P_1L_2+L^2_2-L_1L_3 & P_1L_3+L^2_2-L_1L_3
  \end{bmatrix}\begin{bmatrix}
  -1+P_1^{-1}(L_1-L_2) & P_1^{-1}(L_2-L_3)\\
  -P_1^{-1}L_1 & -1-P_1^{-1}L_2
  \end{bmatrix}.
\end{aligned}\end{equation*}
Then (\ref{L-equation}) takes the following form:
\begin{equation}\left\{\begin{aligned}\label{eq5.3}
  \dot{L}(t)&=\Pi(t,t+\delta)I_{[0,T-\delta]}(t)-L(t),\ t\in[0,T],\\
        L(T)&=0,\\
        [I-L&(t)\bar{\mathcal{C}}(t)]^{-1}\ \mbox{exists},\ t\in[0,T].
\end{aligned}\right.\end{equation}
and $\Pi(\cdot,\cdot)$ satisfies
\begin{equation}\left\{\begin{aligned}\label{eq5.4}
  -\frac{\partial\Pi(t,\theta)}{\partial t}&=0,\\
                                   \Pi(t,t)&=\int_t^{(t+\delta)\wedge T}\Pi(t,\theta)d\theta\Xi_1(t)+\Xi_1(t)^\top\int_t^{(t+\delta)\wedge T}\Pi(t,\theta)d\theta\\
                                           &\quad+\int_t^{(t+\delta)\wedge T}\Pi(t,\theta)d\theta\Xi_2(t)\int_t^{(t+\delta)\wedge T}\Pi(t,\theta)d\theta+\Xi_3(t).
\end{aligned}\right.\end{equation}
From the above, we can easily see that even though in the simple one-dimensional case, the components $L_1(\cdot),L_2(\cdot)$ and $L_3(\cdot)$ in the matrix equation (\ref{eq5.3}) are heavily cross-coupled. Hence it is more complex to derive the analytical solution for the general case of (\ref{L-equation}) and (\ref{Pi-equation}). However, Our example is very special, coincidentally we find that $L(\cdot)=0$ and $\Pi(\cdot,\cdot)=0$ are the solutions to (\ref{eq5.3}) and (\ref{eq5.4}), respectively. Hence the optimal open-loop strategy is
\begin{equation*}
  \bar{u}_1(t)=\bar{u}_2(t)=0,\ a.e.\ t\in[0,T],\ \mathbb{P}\mbox{-}a.s.
\end{equation*}

\subsection{Resource allocation problem}

In this section a modified model for a dynamic research and development resource allocation problem under rivalry is considered (see Scherer \cite{F1967}). It is assumed that two firms, labeled as firm A and firm B, are competing with each other for a share of the market for a specific consumer goods. The size of the market is assumed to be fixed for the interval of consideration and it is assumed that it will not change with time nor will it be affected by the research and development effort of either firm. However, each firm's share of the market does depend on the quality of their product and in turn depends on the research and
development effort of each firm. It is further assumed that the product is of such a nature that all other costs, such as the cost of modifying plants, are negligible compared with the cost of research and development. For highly technical products, this assumption may be fairly reasonable.

Let $V_A(t)$ and $V_B(t)$ be the amounts of money invested in the research and development by firm A and firm B at time $t$, respectively. Let $x(\cdot)$ be a certain kind of measurement of technical gap between these two firms. The evolution of this gap is modeled by
\begin{equation}\label{gap}
  \dot{x}(t)=V_B^{\frac{1}{2}}(t-\delta)-\alpha V_A^{\frac{1}{2}}(t-\delta),
\end{equation}
where $\delta$ is the time delay, which represents the time required for the scientific study, i.e. if the firm invests in research at time $t-\delta$, he will get a return at time $t$. The multiplying factor $\alpha\geq 1$ accounts for the fact that it is easier for a developing firm, firm A, to catch up than for firm B, which is technically advanced, to innovate.

It is assumed that the shares of the market are equal when there is no difference in technical levels. When
there is a gap between technical levels of these two firms, a portion of the market of the firm lagging technically is temporarily taken over by its rival. This portion is assumed to be proportional to the square of the difference in technical levels. It is assumed that there is no permanent or lasting effect of this take-over of the market. In other words, the share of the market at each instant is determined by the technical levels at that instant only. Thus, the revenues of these two firms for finite horizon $[0,T]$ are
\begin{equation}\begin{aligned}\label{cost-1}
  &J_A=\int_0^T\bigg[\frac{V}{2x_0^2}(x^2(t)-x^2(t-\delta))+(V_A(t)-e^{\gamma\delta}V_A(t-\delta))\bigg]e^{-\gamma t}dt,\\
  &J_B=\int_0^T\bigg[-\frac{V}{2x_0^2}(x^2(t)-e^{\gamma\delta}x^2(t-\delta))+(V_B(t)-e^{\frac{1}{2}\gamma\delta}V_B(t-\delta))\bigg]e^{-\gamma t}dt,
\end{aligned}\end{equation}
where $x_0$ is some constant such that, when $x$ reaches $x_0$, the market is completely taken over by firm B, $V$ is the quasi-rent that is assumed to be a constant and $\gamma$ is the discount rate. $V_A(t)-e^{\gamma\delta}V_A(t-\delta)$, $V_B(t)-e^{\frac{1}{2}\gamma\delta}V_B(t-\delta)$ represent the difference of the scientific research for firm A, B in the time interval $[t-\delta,t]$, respectively. $e^{\gamma\delta}$, $e^{\frac{1}{2}\gamma\delta}$ are the investment weights, since the firm A lags technically, he should invest more money in the scientific research and hence the weight is more high. $\frac{x^2(t)-x^2(t-\delta)}{x_0^2}$ represents the technology gap improvement ratio at time $t$ comparing with time $t-\delta$, since the firm B has stronger technology strength, the weight $e^{\gamma\delta}$ is imposed on the measurement of technology gap improvement ratio for firm B. In other word, the firm B has to improve more technology strength to achieve the same technology gap improvement ratio as the firm A. There is no doubt that both companies want to minimize scientific research investment in order to obtain more technological improvements, i.e. the firm A wants to minimize $\frac{x^2(t)-x^2(t-\delta)}{x_0^2}$ while minimize $V_A(t)-e^{\gamma\delta}V_A(t-\delta)$, and the firm B wants to maximize $\frac{x^2(t)-e^{\gamma\delta}x^2(t-\delta)}{x_0^2}$ while minimize $V_B(t)-e^{\frac{1}{2}\gamma\delta}V_B(t-\delta)$.

The above problem can be converted to a standard linear quadratic Stackelberg differential game with time delay by the following change of variables:
\begin{equation*}
  X(t)=e^{-\frac{\gamma t}{2}}x(t),\quad u_1(t)=V_A^{\frac{1}{2}}(t)e^{-\frac{\gamma}{2} t},\quad u_2(t)=V_B^{\frac{1}{2}}(t)e^{-\frac{\gamma}{2} t},\quad Q=\frac{V}{2x_0^2},\quad \bar{Q}=\frac{e^{-\gamma\delta}V}{2x_0^2},
\end{equation*}
then (\ref{gap})-(\ref{cost-1}) become
\begin{equation}\left\{\begin{aligned}\label{modify-deter}
  \dot{X}(t)=&-\frac{\gamma}{2}X(t)-\alpha e^{-\frac{\gamma\delta}{2}}u_1(t-\delta)+e^{-\frac{\gamma\delta}{2}}u_2(t-\delta),\ t\in[0,T],\\
         J_A=&\int_0^T\bigg[QX^2(t)-\bar{Q}X^2(t-\delta)+u_1^2(t)-u_1^2(t-\delta)\bigg]dt,\\
         J_B=&\int_0^T\bigg[-QX^2(t)+QX^2(t-\delta)+u_2^2(t)-e^{-\frac{\gamma\delta}{2}}u_2^2(t-\delta)\bigg]dt.
\end{aligned}\right.\end{equation}

There must be some uncertain factors affecting the scientific research output, so we extend the above model  to the stochastic case:
\begin{equation}\left\{\begin{aligned}\label{modify-stochas}
  dX(t)&=\Big[-\frac{\gamma}{2}X(t)-\alpha e^{-\frac{\gamma\delta}{2}}u_1(t-\delta)+e^{-\frac{\gamma\delta}{2}}u_2(t-\delta)\Big]dt\\
       &\quad+\big[CX(t)+\bar{D}_1u_1(t-\delta)+\bar{D}_2u_2(t-\delta)\big]dW(t),\ t\in[0,T],\\
    J_A&=\mathbb{E}\int_0^T\bigg[QX^2(t)-\bar{Q}X^2(t-\delta)+u_1^2(t)-u_1^2(t-\delta)\bigg]dt,\\
    J_B&=\mathbb{E}\int_0^T\bigg[-QX^2(t)+QX^2(t-\delta)+u_2^2(t)-e^{-\frac{\gamma\delta}{2}}u_2^2(t-\delta)\bigg]dt,
\end{aligned}\right.\end{equation}
where $C,\bar{D}_1,\bar{D}_2$ are all constants satisfying
\begin{equation*}
  \bar{D}_1\neq0,\quad\bar{B}_1+C\bar{D}_1=0,\quad \bar{B}_
  2+C\bar{D}_2\neq0.
\end{equation*}

Now we apply the theoretical results in the above two sections to seek the optimal open-loop strategy. The coefficients take the following values:
\begin{equation*}\begin{aligned}
  &A(t)=-\frac{\gamma}{2},\quad \bar{B}_1(t)=-\alpha e^{-\frac{\gamma\delta}{2}},\quad \bar{B}_2(t)=e^{-\frac{\gamma\delta}{2}},\quad \bar{A}(t)=\bar{C}(t)=0,\\
  &Q_1(t)=Q,\quad \bar{Q}_1(t)=-\bar{Q},\quad R_1(t)=1,\quad \bar{R}_1(t)=-1,\quad G_1=0,\\
  &Q_2(t)=-Q,\quad \bar{Q}_2(t)=Q,\quad R_2(t)=1,\quad \bar{R}_2(t)=-e^{-\frac{\gamma\delta}{2}},\quad G_2=0.
\end{aligned}\end{equation*}

Apparently the coefficients satisfies \textbf{(A1)-(A3)}, and now
\begin{equation*}
  \Omega_1(t)=P_1(t+\delta)\bar{D}_1^2(t+\delta),\quad \Omega_2(t)=\Omega_3(t)=1-e^{-\frac{\gamma\delta}{2}},
\end{equation*}
where $P_1(\cdot)$ is the solution the following linear ODE:
\begin{equation}\left\{\begin{aligned}\label{ex2-P1}
  \dot{P}_1(t)&=(\gamma-C^2)P_1(t)-Q+\bar{Q},\ t\in[0,T],\\
        P_1(T)&=0.
\end{aligned}\right.\end{equation}
It is easy to verify that $P_1(t)=(Q-\bar{Q})\int_t^Te^{(C^2-\gamma)(s-t)}ds>0$, hence $\Omega_1>0$ and $\Omega_1^{-1}(t)=P_1^{-1}(t+\delta)\bar{D}_1^{-2}(t+\delta)$. Now (\ref{P2-equation}) reads
\begin{equation}\left\{\begin{aligned}\label{ex2-P2}
  \dot{P}_2(t)&=(\gamma-C^2)P_2(t),\ t\in[0,T],\\
        P_2(T)&=0,
\end{aligned}\right.\end{equation}
and it follows that $P_2(t)\equiv0$. In this case,
\begin{eqnarray*}\begin{aligned}
  &\mathcal{A}_2\equiv\mathcal{A}_3\equiv\bar{\mathcal{A}}_2\equiv\bar{\mathcal{A}}_3\equiv\mathcal{E}\equiv0,\quad\quad\quad\quad
  \mathcal{G}_1\equiv0,\quad\quad\quad\quad
  \mathcal{A}_1(t)=-\frac{\gamma}{2}\begin{bmatrix}
  1 & 0\\
  0 & 1
  \end{bmatrix},\\
  &\mathcal{B}(t)=-C^2P_1^{-1}(t)I_{[\delta,T]}(t)\begin{bmatrix}
  0 & 1\\
  1 & 0
  \end{bmatrix},\quad\quad\quad\quad \mathcal{C}(t)=CP_1^{-1}(t)I_{[\delta,T]}(t)\begin{bmatrix}
  0 & 1\\
  1 & 0
  \end{bmatrix},\\
  &\bar{\mathcal{A}}_1(t)= \begin{bmatrix}
  C & 0\\
  0 & C
  \end{bmatrix},\quad\quad\quad\quad\quad\quad\bar{\mathcal{C}}(t)=-P_1^{-1}(t)I_{[\delta,T]}(t)\begin{bmatrix}
  0 & 1\\
  1 & 0
  \end{bmatrix},\\
  &\mathcal{D}(t)= \begin{bmatrix}
  0 \\
  \bar{B}_2+C\bar{D}_2I_{[\delta,T]}(t)
  \end{bmatrix},\quad\quad\quad\quad\quad\quad\mathcal{G}_2(t)= \begin{bmatrix}
  -P_1(t)(\bar{B}_2+C\bar{D}_2) \\
  0
  \end{bmatrix}.
\end{aligned}\end{eqnarray*}
Denote
\begin{equation*}\begin{aligned}
  L(t)\triangleq \begin{bmatrix}
  L_1(t) & L_2(t)\\
  L_2(t) & L_3(t)
  \end{bmatrix},\quad\quad\quad \Pi(t,\theta)\triangleq \begin{bmatrix}
  \Pi_1(t,\theta) & \Pi_2(t,\theta)\\
  \Pi_2(t,\theta) & \Pi_3(t,\theta)
  \end{bmatrix}.
\end{aligned}\end{equation*}
Similar to Section 5.1, with some computations, we can get for $t\in[\delta,T]$,
\begin{equation*}\begin{aligned}
  &[I-L(t)\bar{\mathcal{C}}(t)]^{-1}\\
  &=\frac{P_1(t)}{L_1(t)L_3(t)-(P_1(t)+L_2(t))^2}\begin{bmatrix}
  -P_1(t)-L_2(t) & L_1(t)\\
  L_3(t) & -P_1(t)-L_2(t)
  \end{bmatrix}.
\end{aligned}\end{equation*}
It follows that for $t\in[\delta,T]$ (omitting $t$)
\begin{equation*}\begin{aligned}
  \Xi_1(t)&=-C^2P_1^{-1}\begin{bmatrix}
  L_2 & L_3\\
  L_1 & L_2
  \end{bmatrix}+\frac{C^2P_1^{-1}}{L_1L_3-(P_1+L_2)^2}\begin{bmatrix}
  L_3 & -P_1-L_2\\
  -P_1-L_2 & L_1
  \end{bmatrix}\\
  &\quad\times\begin{bmatrix}
  L_1 & L_2\\
  L_2 & L_3
  \end{bmatrix}\begin{bmatrix}
  P_1+L_2 & L_3\\
  L_1 & P_1+L_2
  \end{bmatrix}-[1-e^{-\frac{\gamma\delta}{2}}]^{-1}(\bar{B}_2+C\bar{D}_2)^2\begin{bmatrix}
  0 & 0\\
  P_1+L_2 & L_3
  \end{bmatrix},\\
  \Xi_2(t)&=C^2P_1^{-1}\begin{bmatrix}
  0 & 1\\
  1 & 0
  \end{bmatrix}-\frac{C^2P_1^{-1}}{L_1L_3-(P_1+L_2)^2} \begin{bmatrix}
  L_3 & -P_1-L_2\\
  -P_1-L_2 & L_1\\
  \end{bmatrix}\\
  &\quad\times
  \begin{bmatrix}
  L_2 & L_1\\
  L_3 & L_2
  \end{bmatrix}+[1-e^{-\frac{\gamma\delta}{2}}]^{-1}(\bar{B}_2+C\bar{D}_2)^2\begin{bmatrix}
  0 & 0\\
  0 & 1
  \end{bmatrix},\\
  \Xi_3(t)&=C^2P_1^{-1}\begin{bmatrix}
  2L_1L_2 & L^2_2+L_1L_3\\
  L^2_2+L_1L_3 & 2L_2L_3
  \end{bmatrix}+[1-e^{-\frac{\gamma\delta}{2}}]^{-1}(\bar{B}_2+C\bar{D}_2)^2\\
  &\quad\times\begin{bmatrix}
  (P_1+L_2)^2 & L_3(P_1+L_2)\\
  L_3(P_1+L_2) & L_3^2
  \end{bmatrix}+C^2\begin{bmatrix}
  L_1 & L_2\\
  L_2 & L_3
  \end{bmatrix}\\
  &\quad-\frac{C^2P_1^{-1}}{L_1L_3-(P_1+L_2)^2}\begin{bmatrix}
  P_1+L_2 & L_1\\
  L_3 & P_1+L_2
  \end{bmatrix}\\
  &\quad\times\begin{bmatrix}
  -P_1-L_2 & L_1\\
  L_3 & -P_1-L_2
  \end{bmatrix}\begin{bmatrix}
  L_1 & L_2\\
  L_2 & L_3
  \end{bmatrix}\begin{bmatrix}
  P_1+L_2 & L_3\\
  L_1 & P_1+L_2
  \end{bmatrix}.
\end{aligned}\end{equation*}
Then (\ref{L-equation}) becomes:
\begin{equation}\left\{\begin{aligned}\label{ex2-L}
  \dot{L}(t)&=(\gamma-C^2)L(t)+\Pi(t,t+\delta)I_{[0,T-\delta]}(t),\ t\in[\delta,T],\\
        L(T)&=0,\\
        [I-L&(t)\bar{\mathcal{C}}(t)]^{-1}\ \mbox{exists},\ t\in[0,T].
\end{aligned}\right.\end{equation}
and $\Pi(\cdot,\cdot)$ satisfies
\begin{equation}\left\{\begin{aligned}\label{ex2-Pi}
  -\frac{\partial\Pi(t,\theta)}{\partial t}=&-\gamma\Pi(t,\theta),\\
                                   \Pi(t,t)=&\int_t^{(t+\delta)\wedge T}\Pi(t,\theta)d\theta\Xi_1(t)+\Xi_1(t)^\top\int_t^{(t+\delta)\wedge T}\Pi(t,\theta)d\theta\\
                                            &+\int_t^{(t+\delta)\wedge T}\Pi(t,\theta)d\theta\Xi_2(t)\int_t^{(t+\delta)\wedge T}\Pi(t,\theta)d\theta+\Xi_3(t).
\end{aligned}\right.\end{equation}
Because the components $L_1(\cdot),L_2(\cdot)$ and $L_3(\cdot)$ in the matrix equation are coupled, hence we can not give the analytical solution to (\ref{ex2-L}).

Moreover, the optimal open-loop strategy is as follows:
\begin{equation*}\begin{aligned}
  \bar{u}_1(t-\delta)&=K_1(t-\delta)\hat{\phi}(t|t-\delta),\ a.e.\ t\in[\delta,T+\delta],\ \mathbb{P}\mbox{-}a.s.,
\end{aligned}\end{equation*}
\begin{equation*}\begin{aligned}
  \bar{u}_2(t-\delta)&=K_2(t-\delta)\hat{\phi}(t|t-\delta),\ a.e.\ t\in[\delta,T+\delta],\ \mathbb{P}\mbox{-}a.s.,
\end{aligned}\end{equation*}
where
\begin{equation*}\begin{aligned}
  K_1(t-\delta)&\triangleq\Omega_1^{-1}(t-\delta)\bigg\{\bar{D}_1\bar{D}_2P_1(t)\Omega_3^{-1}(t-\delta)(\bar{B}_2+C\bar{D}_2)\big[L_2(t)+P_1(t),L_3(t)\big]\\
               &\quad-\big[\bar{B}_1L_1(t),\bar{B}_1L_2(t)\big]-\frac{C\bar{D}_1}{L_1(t)L_3(t)-(P_1(t)+L_2(t))^2}\big[-P_1(t)-L_2(t),L_1(t)\big]\\
               &\qquad\times\begin{bmatrix}
                               L_1(t) & L_2(t)\\
                               L_2(t) & L_3(t)
                            \end{bmatrix}\begin{bmatrix}
                               P_1(t)+L_2(t) & L_3(t)\\
                               L_1(t) & P_1(t)+L_2(t)
                            \end{bmatrix}\\
               &\quad-\bigg[\bar{D}_1\bar{D}_2P_1(t)\Omega_3^{-1}(t-\delta)\big[0,\bar{B}_2+C\bar{D}_2\big]-\big[\bar{B}_1,0\big]\\
               &\qquad-\frac{C\bar{D}_1}{L_1(t)L_3(t)-(P_1(t)+L_2(t))^2}\big[-P_1(t)-L_2(t),L_1(t)\big]\begin{bmatrix}
                                                                                                          L_2(t) & L_1(t)\\
                                                                                                          L_3(t) & L_2(t)
                                                                                                       \end{bmatrix}\bigg]\\
               &\qquad\times\int_t^{(t+\delta)\wedge T}\Pi(t,\theta)d\theta\bigg\},\\
  K_2(t-\delta)&\triangleq-(\bar{B}_2+C\bar{D}_2)\Omega_3^{-1}(t-\delta)\begin{bmatrix}
                                                                            L_2(t)+P_1(t)-\int_{t}^{(t+\delta)\wedge T}\Pi_2(t,\theta)d\theta\\
                                                                            L_3(t)-\int_{t}^{(t+\delta)\wedge T}\Pi_3(t,\theta)d\theta
                                                                        \end{bmatrix}^\top.
\end{aligned}\end{equation*}

Finally, we give some simulation results. The following algorithm schemes are designed to simulate the numerical solution to (\ref{ex2-L}):

\begin{figure}[H]
\centering
\tikzstyle{startstop} = [rectangle,rounded corners, minimum width=1cm,minimum height=0.3cm,text centered, draw=black,fill=red!30]
\tikzstyle{io} = [trapezium, trapezium left angle = 70,trapezium right angle=110,minimum width=2cm,minimum height=0.3cm,text centered,draw=black,fill=blue!30]
\tikzstyle{process} = [rectangle,minimum width=2cm,minimum height=0.3cm,text centered,text width =1.5cm,draw=black,fill=orange!30]
\tikzstyle{process1} = [rectangle,minimum width=2cm,minimum height=0.3cm,text centered,text width =2cm,draw=black,fill=orange!30]
\tikzstyle{process2} = [rectangle,minimum width=2cm,minimum height=0.3cm,text centered,text width =7cm,draw=black,fill=orange!30]
\tikzstyle{process3} = [rectangle,minimum width=2cm,minimum height=0.3cm,text centered,text width =5cm,draw=black,fill=orange!30]
\tikzstyle{process4} = [rectangle,minimum width=2cm,minimum height=0.3cm,text centered,text width =7cm,draw=black,fill=orange!30]
\tikzstyle{decision} = [diamond,minimum width=2cm,minimum height=0.3cm,text centered,draw=black,fill=green!30]
\tikzstyle{arrow} = [thick,->,>=stealth]
\tikzstyle{arrow1} = [thick,-,>=stealth]
\begin{tikzpicture}[node distance=0.8cm]
\node (start) [startstop] {\scriptsize Start};
\node (input1) [io,below of=start] {\tiny input $T$, $\delta$, $N$};
\node (process1) [process,below of=input1] {\tiny$\Delta=\frac{T}{N+1}$};
\node (process2) [process,below of=process1] {\tiny$d=\frac{\delta}{\Delta}$};
\node (process3) [process,below of=process2] {\tiny$k=N+1$};
\node (decision1) [decision,below of=process3,yshift=-1cm] {\tiny$k>N-d+1$};
\node (process4a) [process,left of=decision1,xshift=-3.3cm] {\tiny$L(k)=0$};
\node (process5) [process1,below of=process4a] {\tiny$\Pi(k,k+d)=0$};
\node (process6) [process,below of=process5] {\tiny$k=k-1$};
\node (decision4b) [decision,below of =decision1,yshift=-1.5cm] {\tiny$k\geq 0$};
\node (process7) [process2,below of=decision4b,yshift=-0.5cm] {\tiny$L(k)=L(k+1)+(C^2-\gamma)\Delta L(k+1)-\Delta\Pi(k+1,k+1+d)$};
\node (process12) [process,below of =process7] {\tiny$j=k+2$};
\node (process13) [process,below of=process12] {\tiny$mid\Pi=0$};
\node (decision3) [decision,below of =process13,yshift=-1.5cm] {\tiny
$j\leq \min(N+1,k+1+d)$};
\node (process8) [process4,below of=decision3,yshift=-2cm] {\tiny$\Pi(k+1,k+1)=\Delta mid\Pi\Xi_1(k+1)+\Delta\Xi_1(k+1)mid\Pi+\Delta^2mid\Pi\Xi_2(k+1)mid\Pi+\Xi_3(k+1)$};
\node (process9) [process,below of=process8,yshift=-0.2cm] {\tiny$i=1$};
\node (process14) [process3,left of=decision3,xshift=-5.3cm] {\tiny$mid\Pi=mid\Pi+\Pi(k+1,j)$};
\node (process15) [process,left of=process14,xshift=-3.3cm] {\tiny$j=j+1$};
\node (decision2) [decision,below of =process9,yshift=-1.5cm] {\tiny$i\leq \min(d,N+1-k)$};
\node (process10) [process3,left of=decision2,xshift=-4.2cm] {\tiny$\Pi(k,k+i)=(1-\gamma\Delta)\Pi(k+1,k+i)$};
\node (process11) [process,left of=process10,xshift=-3.3cm] {\tiny$i=i+1$};
\node (process16) [process,right of=decision2,xshift=3.3cm] {\tiny$k=k-1$};
\node (stop) [startstop,left of=decision4b,xshift=-1.6cm] {\scriptsize Stop};
\draw [arrow] (start) -- (input1);
\draw [arrow] (input1) -- (process1);
\draw [arrow] (process1) -- (process2);
\draw [arrow] (process2) -- (process3);
\draw [arrow] (process3) -- (decision1);
\draw [arrow] (decision1) -- node[anchor=south] {yes} (process4a);
\draw [arrow] (process4a) -- (process5);
\draw [arrow] (process5) -- (process6);
\draw [arrow1] (process6) -- (-6,-6.6);
\draw [arrow1] (-6,-6.6) -- (-6,-4);
\draw [arrow] (-6,-4) -- (-0.2,-4);
\draw [arrow] (decision1) -- node[anchor=west] {no} (decision4b);
\draw [arrow] (decision4b) -- node[anchor=south] {no}(stop);
\draw [arrow] (decision4b) -- node[anchor=west] {yes} (process7);
\draw [arrow] (process7) -- (process12);
\draw [arrow] (process8) -- (process9);
\draw [arrow] (process9) -- (decision2);
\draw [arrow] (decision2) -- node[anchor=south]{yes} (process10);
\draw [arrow] (process10) -- (process11);
\draw [arrow1] (process11) -- (-9.1,-16.8);
\draw [arrow] (-9.1,-16.8) -- (-0.2,-16.8);
\draw [arrow] (decision2) -- node[anchor=south] {no} (process16);
\draw [arrow1] (process16) -- (4,-4);
\draw [arrow] (4,-4) -- (0.2,-4);
\draw [arrow] (process12) -- (process13);
\draw [arrow] (process13) -- (decision3);
\draw [arrow] (decision3) -- node[anchor=west] {no} (process8);
\draw [arrow] (decision3) -- node[anchor=south] {yes} (process14);
\draw [arrow] (process14) -- (process15);
\draw [arrow1] (process15) -- (-10.2,-10.7);
\draw [arrow] (-10.2,-10.7) -- (-0.2,-10.7);
\end{tikzpicture}
\caption{Algorithm scheme of the solution to (\ref{ex2-L})}
\end{figure}
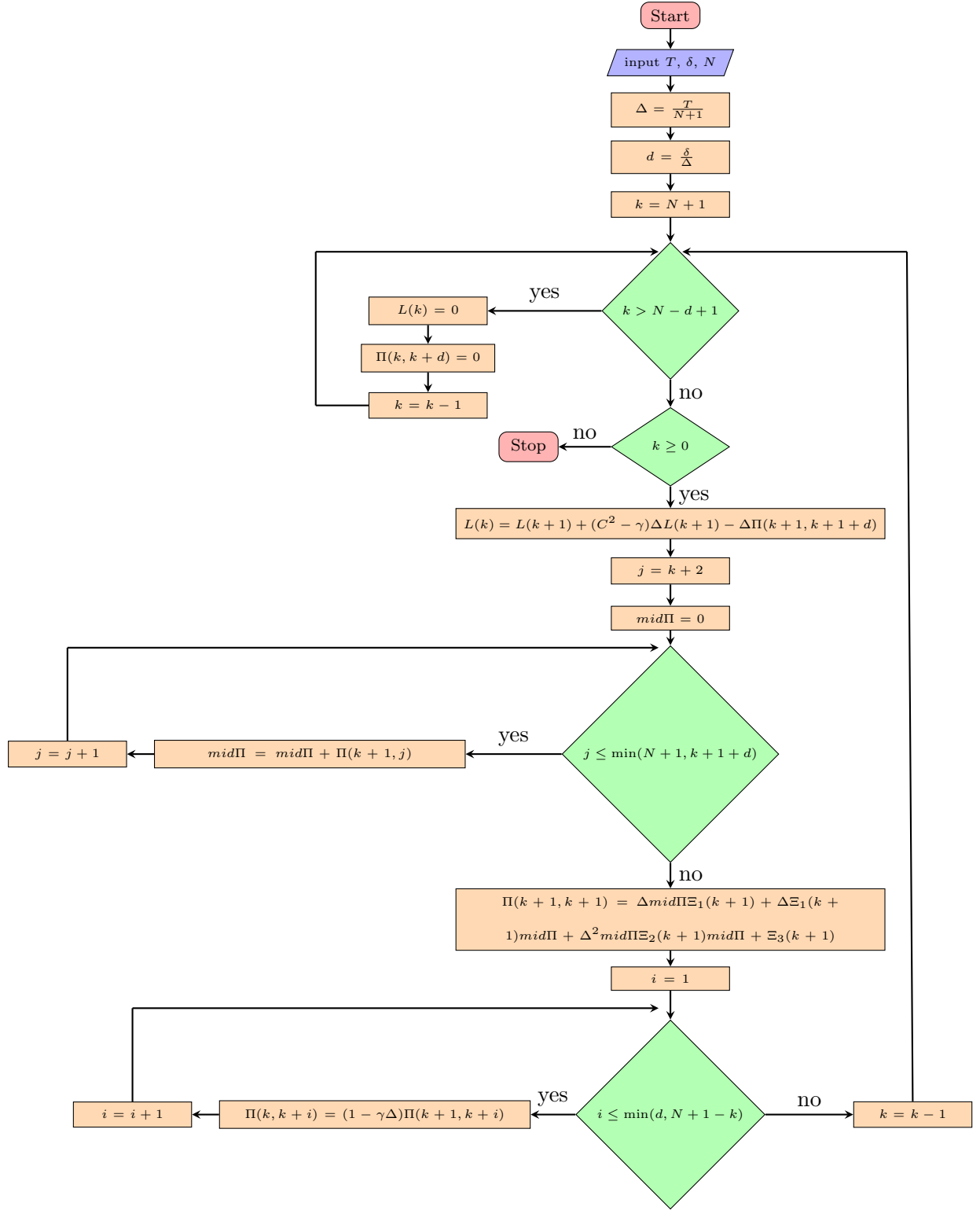

Let $\delta=1$, $\gamma=0.02$, $V=10$, $x_0=10$, $\alpha=2$, $C=-\frac{\sqrt\gamma}{2}$, $\bar{D}_1=-\bar{B}_1C^{-1}$, $\bar{D}_2=e^{\gamma\delta}$, $Q_1=\frac{V}{2x_0^2}$, $\bar{Q}_1=-\frac{e^{-\gamma\delta}V}{2x_0^2}$, $t_0=0$, $T=10$, $\varphi(0)=1$. Using the above algorithm scheme, we can plot the pictures of $P_1(\cdot), L_1(\cdot), L_2(\cdot), L_3(\cdot)$ as follows:
\begin{figure}[H]
  \centering

  \subfloat[The solution to linear ODE (\ref{ex2-P1})] 
    {
        \begin{minipage}[t]{0.5\textwidth}
            \centering          
            \includegraphics[width=\textwidth]{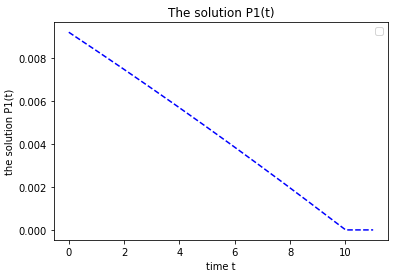}   
        \end{minipage}%
    }
    \subfloat[The solution to (\ref{ex2-L}) --- $L_1(t)$] 
    {
        \begin{minipage}[t]{0.5\textwidth}
            \centering      
            \includegraphics[width=\textwidth]{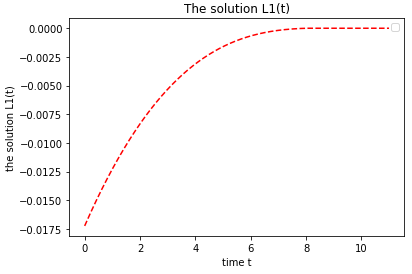}   
        \end{minipage}
    }%
\end{figure}

\begin{figure}[H]
  \centering

  \subfloat[The solution to (\ref{ex2-L}) --- $L_2(t)$] 
    {
        \begin{minipage}[t]{0.5\textwidth}
            \centering          
            \includegraphics[width=\textwidth]{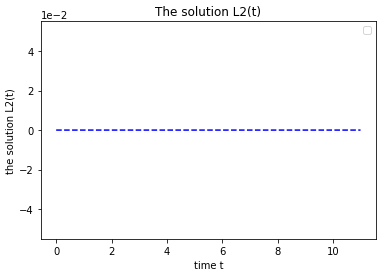}  
        \end{minipage}%
    }
    \subfloat[The solution to (\ref{ex2-L}) --- $L_3(t)$] 
    {
        \begin{minipage}[t]{0.5\textwidth}
            \centering      
            \includegraphics[width=\textwidth]{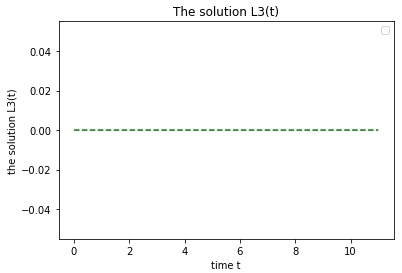}   
        \end{minipage}
    }%
\end{figure}

The simulation results of the optimal strategies $u_1(\cdot)$, $u_2(\cdot)$ are as follows:
\begin{figure}[H]
  \centering

  \subfloat[The optimal strategy $u_1(t)$] 
    {
        \begin{minipage}[t]{0.5\textwidth}
            \centering          
            \includegraphics[width=\textwidth]{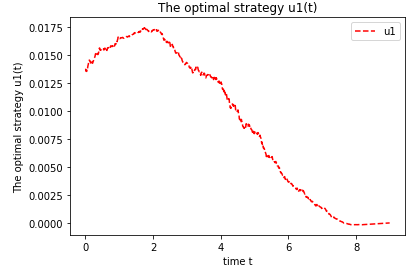}  
        \end{minipage}%
    }
    \subfloat[The optimal strategy $u_2(t)$] 
    {
        \begin{minipage}[t]{0.5\textwidth}
            \centering      
            \includegraphics[width=\textwidth]{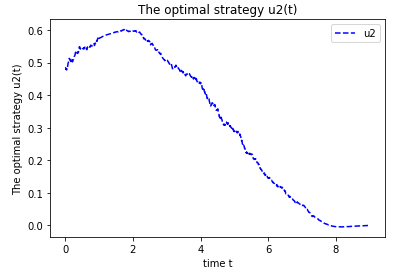}   
        \end{minipage}
    }%
\end{figure}

In the above two figures, the left/right figure draws the optimal strategy of the follower/leader. Judging from the trend of the two figures, the optimal strategy of the follower follows the leader, which is consistent with the essence of the leader-follower problem. Furthermore, the value of $u_2(\cdot)$ seems to be greater than $u_1(\cdot)$, this is rational because the leader wants to keep the technology ahead and so he should invest more money than the follower.

\section{Concluding remarks}

This paper is an extension of \cite{XSZ18}. Our model is quite general, in which the state equations of leader and follower both contain state delay and control delay, moreover, which both enter into the diffusion. First we deal with the optimization problem of the follower, which is an linear quadratic stochastic optimal control problem with time delay, for any choice of the leader. By introducing a Pseudo-Riccati equation (see (\ref{P1-equation})), we give the sufficient and necessary condition of the solvability for follower's problem (see Theorem \ref{thm3.2}). Next we address the optimization problem of the leader, which is an linear quadratic stochastic optimal control problem with a state equation formed by an SDDE and an ABSDE. We stack the forward variables and the backward variables obtained in the optimization of the follower and the leader, then introduce a special matrix equation (see (\ref{L-equation})), and finally we derive the necessary condition of the solvability for the leader's problem (see Theorem \ref{thm4.2}). In summary, the open-loop stackelberg strategy is shown in Theorem \ref{thm4.3}. At the end of this paper, we take two examples to illustrate the applications of the above theoretical results.

The assumptions ${\bf (A1)\mbox{-}(A3)}$ could be weakened, and some more general cases can be dealt with. Some reasonable relations among the adjoint variables and the state variable (including its delayed and time-advanced terms) should be proposed and investigated carefully. The solvability of (Pseudo-)Riccati equations are interesting. However, these topics are difficult and very challenging. We will consider them in the near future.


\end{document}